\documentclass[11pt]{amsart}


\usepackage[english]{babel}
\usepackage{amsmath,amsthm,amsfonts,amssymb,stmaryrd}
\usepackage{mathtools}
\usepackage{tikz-cd}
\usepackage{tikz}
\usepackage{color}

\usepackage[colorlinks=true,linkcolor=blue]{hyperref}

\newcommand{\hooklongrightarrow}{\lhook\joinrel\longrightarrow}

\usetikzlibrary{decorations.markings}
\tikzset{double line with arrow/.style args={#1,#2}{decorate,decoration={markings,%
			mark=at position 0 with {\coordinate (ta-base-1) at (0,1pt);
				\coordinate (ta-base-2) at (0,-1pt);},
			mark=at position 1 with {\draw[#1] (ta-base-1) -- (0,1pt);
				\draw[#2] (ta-base-2) -- (0,-1pt);
}}}}
\tikzset{Equal/.style={-,double line with arrow={-,-}}}

%

\newcommand{\Z}{\mathbb{Z}}
\newcommand{\F}{\mathbb{F}}
\newcommand{\Fp}{\mathbb{F}_p}
\newcommand{\Zp}{\mathbb{Z}_p}
\newcommand{\p}{\mathfrak{p}}
\DeclareMathOperator{\rk}{rk}
\newcommand{\prk}{\rk_p}
\newcommand{\HH}{\mathcal{H}}
\newcommand{\Ho}{\operatorname{H}^{*}}

\DeclareMathOperator{\Ker}{Ker}
\DeclareMathOperator{\Image}{Im}
\DeclareMathOperator{\id}{id}
\DeclareMathOperator{\Hom}{Hom}
\DeclareMathOperator{\res}{res}

\DeclareMathOperator{\Ext}{Ext}
\DeclareMathOperator{\YExt}{YExt}
\DeclareMathOperator{\XExt}{XExt}
\DeclareMathOperator{\depth}{depth}

\DeclarePairedDelimiter{\abs}{\lvert}{\rvert}
\DeclarePairedDelimiter{\gen}{\langle}{\rangle}
\newcommand{\isom}{\cong}
\newcommand{\normaleq}{\unlhd}

\theoremstyle{plain}
\newtheorem{MainTheorem}{Theorem}

\newtheorem*{Notation*}{Notation}
\newtheorem{lemma}{Lemma}[section]
\newtheorem{proposition}[lemma]{Proposition}
\newtheorem{theorem}[lemma]{Theorem}

\newtheorem{conjecture}[lemma]{Conjecture}

\theoremstyle{definition}
\newtheorem{definition}[lemma]{Definition}
\newtheorem{remark}[lemma]{Remark}

\title{A family of finite $p$-groups satisfying Carlson's depth conjecture}
 

\author[O.\ Garaialde Oca\~na]{Oihana Garaialde Oca\~na}
\address{Matematika Saila,
Euskal Herriko Unibertsitatearen Zientzia eta Teknologia Fakultatea,
 posta-kutxa 644, 48080 Bilbo, Spain}
\email{oihana.garayalde@ehu.eus}

\author[L.\ Guerrero Sanchez]{Lander Guerrero Sanchez}
\address{Matematika Saila,
Euskal Herriko Unibertsitatearen Zientzia eta Teknologia Fakultatea,
 posta-kutxa 644, 48080 Bilbo, Spain}
\email{lander.guerrero@ehu.eus}

\author[J.\ Gonz\'alez-S\'anchez]{Jon Gonz\'alez-S\'anchez}
\address{Matematika Saila,
Euskal Herriko Unibertsitatearen Zientzia eta Teknologia Fakultatea,
 posta-kutxa 644, 48080 Bilbo, Spain}
\email{jon.gonzalez@ehu.eus}

\thanks{The second author was supported by the University of the Basque  Country
  predoctoral fellowship  PIF19/44 . The three authors were partially supported by the
  Spanish Government project  MTM2017-86802- P and by the Basque Government project IT974-16}

\begin{document}

\subjclass[2010]{20J06, 13C15, 20D15}
\keywords{mod-$p$ cohomology ring, depth, finite $p$-groups}

\maketitle

\begin{abstract}
Let $p>3$ be a prime number and let $r$ be an integer with $1<r<p-1$. For each $r$, let moreover $G_r$ denote the unique quotient of the maximal class pro-$p$ group of size $p^{r+1}$. We show that the mod-$p$ cohomology ring of $G_r$ has depth one and that, in turn, it satisfies the equalities in Carlson's depth conjecture \cite{Carlson95}.
\end{abstract}


\section{Introduction}
\label{sec:introduction}

Let $p$ be a prime number, let $G$ be a finite $p$-group and  let $\F_p$ denote the finite field of $p$ elements with trivial $G$-action. Then, the mod-$p$ cohomology ring $\Ho(G;\F_p)$ is a finitely generated, graded commutative $\Fp$-algebra (see \cite[Corollary 7.4.6]{Evens91}), and so many ring-theoretic notions can be defined; Krull dimension, associated primes and depth, among others. Some of the aforementioned concepts have a group-theoretic interpretation; for instance, the Krull dimension $\dim \Ho(G;\Fp)$ of $\Ho(G;\F_p)$ equals the $p$-rank  $\prk G$ of $G$, i.e., the largest integer $s\geq 1$ such that $G$ contains an elementary abelian subgroup of rank $s$. However, the depth of $\Ho(G;\F_p)$, written as $\depth \Ho(G;\F_p)$, is the length of the longest regular sequence in $\Ho(G;\F_p)$, and it  seems to be far more difficult to compute.
 There are, however, lower and upper bounds for this number. For instance, in \cite{Duflot81}, Duflot proved that the depth of $\Ho(G;\F_p)$ is at least as big as the $p$-rank of the centre $Z(G)$ of $G$, i.e., $\depth \Ho(G;\Fp)\geq \prk Z(G)$ and, in \cite{Notbohm09}, Notbohm proved that for every elementary abelian subgroup $E$ of $G$ with centralizer $C_G(E)$ in $G$, the inequality $\depth \Ho(G;\Fp)\leq \depth \Ho\big(C_G(E);\Fp\big)$ holds. In \cite{Carlson95}, J. Carlson provided further upper bounds for the depth (see Theorem \ref{thm:DepthCarlson}) and stated a conjecture that still remains open (see Conjecture \ref{conj:Carlson}).

The aim of the present work is to compute the depth of the mod-$p$ cohomology rings of certain quotients of the maximal class pro-$p$ group
that moreover satisfy the equalities in the aforementioned conjecture.
Let $p$ be an odd prime number, let $\Z_p$ denote the ring of $p$-adic integers and let $\zeta$ be a primitive $p$-th root of unity. Consider the  cyclotomic extension $\Zp[\zeta]$ of degree $p-1$ and note that its additive group is isomorphic to $\Zp^{p-1}$. The cyclic group $C_p=\gen{\sigma}$ acts on $\Zp[\zeta]$ via multiplication by $\zeta$, i.e., for any $x\in\Z_p$, the action is given as $x^{\sigma}=\zeta x$. Using the ordered basis $1, \zeta, \dots, \zeta^{p-2}$ in $\Zp[\zeta]\isom \Zp^{p-1}$, this action is given by the matrix
\begin{displaymath}
	\begin{pmatrix}
		0		& 1 	 & 0 	  & \dots  & 0 		\\
		0		& 0 	 & 1 	  & \dots  & 0 		\\
		\vdots  & \vdots & \vdots & \ddots & \vdots \\
		0		& 0		 & 0	  & \dots  & 1		\\
		-1		& -1	 & -1	  & \dots  & -1		
	\end{pmatrix}.
\end{displaymath}
We form the semidirect product $S=C_p\ltimes \Zp^{p-1}$, which is the unique pro-$p$ group of maximal nilpotency class. Note that this is the analogue of the infinite dihedral pro-$2$ group for the $p$ odd case. Moreover, $S$ is a uniserial $p$-adic space group with cyclic point group $C_p$ (compare \cite[Section 7.4]{LeedGreenBook02}). We write $[x, _{k}\sigma]=[x, \sigma, \overset{k}{\vphantom{,}\smash{\dotsc}}\,, \sigma]$ for the iterated group commutator. 
Set $T_0=\Zp[\zeta]$ and define, for each integer $i\geq 1$, 
\[
T_i=(\zeta-1)^i\Zp[\zeta]=[T_0,_i \sigma]=\gamma_{i+1}(S).
\]
These subgroups are all the $C_p$-invariant subgroups of $T_0$, and the successive quotients satisfy
\begin{displaymath}
	T_i/T_{i+1}\isom \Zp[\zeta]/(\zeta-1)\Zp[\zeta]\isom C_p.
\end{displaymath}
Hence, $\abs{T_0:T_i}=p^i$ for every $i\geq 0$. For each integer $r$ with $1<r<p-1$, consider the finite quotient $T_0/T_r=\Zp[\zeta]/(\zeta-1)^r\Zp[\zeta]$ and choose a generating set for $T_0/T_r$ as follows,
\begin{displaymath}
	a_1=1+T_r,\quad a_2=(\zeta-1)+T_r,\quad\dots ,\quad a_r=(\zeta-1)^{r-1}+T_r.
\end{displaymath}
Using the multiplicative notation, we obtain that
$$
T_0/T_r=\gen{a_1,\dotsc,a_r}\isom C_p\times \overset{r}{\vphantom{=}\smash{\dotsb}} \times C_p,
$$
and since for all $i\geq 0$, the subgroups $T_i$ are $C_p$-invariant, we can form the semidirect product
 \begin{equation}\label{eq:Grassemidirectproduct}
 G_r=C_p\ltimes T_0/T_r\isom C_p\ltimes(C_p\times \overset{r}{\vphantom{=}\smash{\dotsb}} \times C_p).
 \end{equation}
The finite $p$-groups $G_r$ have size $p^{r+1}$ and exponent $p$. Note that in particular, $G_2$ is the extraspecial group of size $p^3$ and exponent $p$. We state the main result.

\begin{MainTheorem}\label{thm:mainresultintro}
Let $p>3$ be a prime number, let $r$ be an integer with $1<r<p-1$ and let $G_r$ be given as in \eqref{eq:Grassemidirectproduct}. Then, $\Ho(G_r;\F_p)$ has depth one.
\end{MainTheorem}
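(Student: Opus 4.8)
Since $1<r$, the group $G_r$ has maximal class and order $p^{r+1}\ge p^{3}$, so its centre is cyclic of order $p$; explicitly, any central element must lie in $M:=T_{0}/T_{r}$ (the conjugation action of $C_p=\gen{\sigma}$ on $M$ is faithful because $r\ge 2$), hence in $M^{C_p}=\ker\bigl(\zeta-1\colon M\to M\bigr)=T_{r-1}/T_{r}=\gen{a_r}\isom C_p$, and $a_r$ is indeed central. Thus $\prk Z(G_r)=1$, and Duflot's theorem \cite{Duflot81} gives $\depth\Ho(G_r;\F_p)\ge 1$; the content of the theorem is the reverse inequality. (Notbohm's bound \cite{Notbohm09} is not enough: the centraliser in $G_r$ of an elementary abelian subgroup of rank one is isomorphic to $C_p\times C_p$ or to $C_p^{\,r}$, with Cohen--Macaulay cohomology of depth $2$ and $r$ respectively, so that route only yields $\depth\Ho(G_r;\F_p)\le 2$.)

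\textbf{Strategy for the upper bound.} I would fix, via Duflot's construction, a homogeneous non-zero-divisor $\zeta\in\Ho(G_r;\F_p)$ whose restriction to $\Ho(Z(G_r);\F_p)$ is nonzero, and then exhibit a homogeneous class $\eta\in\Ho(G_r;\F_p)$ with
\[
\eta\notin\zeta\,\Ho(G_r;\F_p)\qquad\text{and}\qquad \eta\cdot\operatorname{H}^{\ge 1}(G_r;\F_p)\subseteq\zeta\,\Ho(G_r;\F_p).
\]
The image of such an $\eta$ is a nonzero socle class of $\Ho(G_r;\F_p)/(\zeta)$, so $\depth\bigl(\Ho(G_r;\F_p)/(\zeta)\bigr)=0$, whence $\depth\Ho(G_r;\F_p)=1$; equivalently, one is exhibiting an associated prime of coheight one — the expected one being the prime attached to $Z(G_r)$ — and invoking the standard inequality $\depth\Ho(G_r;\F_p)\le\dim\Ho(G_r;\F_p)/\p$ for $\p\in\Ass\Ho(G_r;\F_p)$. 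Note already that $\zeta$ cannot be taken in degree $2$: since $Z(G_r)\le\Phi(G_r)$, the map $\operatorname{H}^1(G_r;\F_p)\to\operatorname{H}^1(Z(G_r);\F_p)$ is zero, and in fact (restriction factors through the $C_p$-invariants $\operatorname{H}^2(M;\F_p)^{C_p}$, on which restriction to the socle of $M$ vanishes) $\operatorname{H}^2(G_r;\F_p)\to\operatorname{H}^2(Z(G_r);\F_p)$ is zero as well. This failure of $Z(G_r)$ to be cohomologically visible in low degrees is morally the reason the depth stays equal to $1$.

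\textbf{Carrying it out.} I would use the Lyndon--Hochschild--Serre spectral sequence of $1\to M\to G_r\to C_p\to 1$,
\[
E_2^{s,t}=\operatorname{H}^s\bigl(C_p;\operatorname{H}^t(M;\F_p)\bigr)\ \Longrightarrow\ \operatorname{H}^{s+t}(G_r;\F_p),
\]
which has the smallest possible quotient group. The decisive structural fact is that $M=\Zp[\zeta]/(\zeta-1)^r\Zp[\zeta]$ is, as an $\F_p C_p$-module, a single Jordan block of dimension $r$, and $r<p-1<p$, so it is non-free; consequently $\operatorname{H}^1(M;\F_p)$ and the span of the Bocksteins $\beta x_i$ are again this Jordan block, and $\operatorname{H}^*(M;\F_p)=\Lambda(x_1,\dots,x_r)\otimes\F_p[y_1,\dots,y_r]$ splits in each degree into indecomposable $\F_pC_p$-modules. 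Since $C_p$ has $2$-periodic cohomology and its norm element annihilates every non-free Jordan summand, the rows $s\ge 1$ of the $E_2$-page and the only possibly nonzero differentials ($d_2$, a derivation governed by the extension class, and $d_3$, the Kudo transgression) can be written down explicitly. One then takes $\zeta$ of degree $2p$ to be a lift of the $C_p$-orbit product of $\beta x_r$, where $x_r\in\operatorname{H}^1(M;\F_p)$ is the character of $M$ dual to $a_r$: this orbit product lies in $\operatorname{H}^{*}(M;\F_p)^{C_p}$, restricts to a power of the polynomial generator of $\operatorname{H}^{*}(Z(G_r);\F_p)$, and hence (being a Duflot element) is a non-zero-divisor. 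The class $\eta$, on the other hand, is produced from the ``Jordan tail'': the non-semisimplicity of the $C_p$-action forces a suitable nilpotent class of small degree built from the $x_i$ and their Bocksteins to be annihilated, modulo $\zeta$, by the whole augmentation ideal.

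\textbf{The main obstacle.} The genuine difficulty is this last step, and above all the non-membership $\eta\notin\zeta\,\Ho(G_r;\F_p)$: the abstract invariants $\dim\Ho(G_r;\F_p)=\prk G_r=r$ and $\prk Z(G_r)=1$ only confine the depth to $1\le\depth\Ho(G_r;\F_p)\le r$, so one has no choice but to determine $\Ho(G_r;\F_p)$ in the relevant range of degrees — the $\F_pC_p$-module structure of $\operatorname{H}^*(M;\F_p)$ for a Jordan block, the invariant subring, the differentials above, and the multiplicative extension problems on $E_\infty$. This is exactly what the earlier sections of the paper are for; granting it, Theorem~\ref{thm:mainresultintro} follows from the bookkeeping of the two middle paragraphs, with $G_2=p^{1+2}_+$ (whose mod-$p$ cohomology has depth one classically) serving as a reassuring base case.
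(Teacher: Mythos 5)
Your lower bound is exactly the paper's (Duflot plus $\prk Z(G_r)=1$), and your general framework for the upper bound is sound in principle: if $\zeta$ is a homogeneous non-zero-divisor and $\Ho(G_r;\F_p)/(\zeta)$ has a nonzero socle class, then indeed $\depth\Ho(G_r;\F_p)=1$. But the proposal has a genuine gap: everything that would make this an actual proof is deferred. You never produce the class $\eta$, never verify $\eta\notin\zeta\,\Ho(G_r;\F_p)$, and never verify $\eta\cdot\operatorname{H}^{\geq 1}(G_r;\F_p)\subseteq\zeta\,\Ho(G_r;\F_p)$; you yourself identify this as ``the genuine difficulty'' and then wave at ``the earlier sections of the paper'' to supply it. Those sections do not contain a spectral-sequence computation of $\Ho(G_r;\F_p)$ --- they develop Yoneda and crossed extensions and their products --- so the computation your argument rests on (the $\F_pC_p$-module structure of the $E_2$-page, the differentials $d_2$ and $d_3$, the invariant ring, the multiplicative extension problems, and the identification of a Duflot regular element of degree $2p$) would all have to be carried out from scratch, for every $r$ with $1<r<p-1$ simultaneously. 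As written, the upper bound is a program, not a proof.

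The paper's actual route avoids this computation entirely. It uses Carlson's bound $\depth\Ho(G_r;\F_p)\leq\omega_a(G_r)\leq\omega_d(G_r)$ (Theorem \ref{thm:DepthCarlson}) and shows $\omega_d(G_r)=1$ by exhibiting a single nonzero class $\theta_r=\sigma^*\cup\eta_r\in\operatorname{H}^3(G_r;\F_p)$ that restricts to zero on $C_{G_r}(E)$ for every rank-$2$ elementary abelian $E\leq G_r$. Here $\eta_r$ is realized concretely as the crossed $2$-fold extension \eqref{eq:etar} built from the group $\widehat G_r$ of order $p^{r+2}$; non-vanishing of $\theta_r$ is a purely group-theoretic argument (Proposition \ref{prop: Equivalent Crossed X Diagram} plus the three subgroup lemma), and the vanishing of the restrictions is a short case analysis, since $C_{G_r}(E)$ is either $\gen{a_1,\dotsc,a_r}$ (where $\sigma^*$ already dies) or an extraspecial situation where the product of three degree-one classes vanishes. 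If you want to salvage your approach, the honest comparison is that yours would yield much more (a presentation of the ring in low degrees, the associated prime explicitly) at the cost of a computation the paper was designed to circumvent; but until that computation is done, the theorem is not proved. One further small correction: your aside that Notbohm's bound only gives $\depth\leq 2$ is consistent with the paper, but note the paper does use the rank-$2$ centralizers --- just through the detection invariant $\omega_d$ rather than through Notbohm's inequality.
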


\subsubsection*{Notation}\label{notation}
Throughout let $p$ be an odd prime number and let $G$ denote a finite group. Let $R$ be a commutative ring with unity. A $G$-module $A$ will be a right $RG$-module. For such $G$-modules, we shall use additive notation in Sections \ref{sec:preliminaries} and \ref{sec:productsextensions}, and multiplicative notation in Section \ref{sec:depthonepgroups}, for our convenience. Moreover, if $a\in A$ and $g\in G$, we write $a^g$ to denote the action of $g$ on $a$.

Let $A$ be a $G$-module and let $P_{*}\longrightarrow R$ be a projective resolution of the trivial $G$-module $R$, then for every $n\geq 0$, the $n$-th cohomology group $\operatorname{H}^n(G;A)$ is defined as $\Ext^n(R,A)=\operatorname{H}^n(\Hom_G(P_{*},A))$. Let $K\leq G$ be a subgroup of $G$ and let $\iota\colon K\hooklongrightarrow G$ denote an inclusion map. This map induces the restriction map in cohomology, which will be denoted by $\res^G_K\colon \Ho(G;A)\longrightarrow \Ho(K;A)$.

Group commutators are given as $[g,h]=g^{-1}h^{-1}gh=g^{-1}g^h$ and for every $k\geq 1$, iterated commutators are written as $[x, y, \overset{k}{\vphantom{,}\smash{\dotsc}}\,, y]=[x, _{k}y]$, where we use left normed group commutators, i.e., $[x,y,z]=[[x,y],z]$. Also, the $k$-th term of the lower central series of $G$ is denoted by $\gamma_k(G)=[G, \overset{k}{\vphantom{,}\smash{\dotsc}}\,, G]$.

\section{Preliminaries}\label{sec:preliminaries}

\subsection{Depth}

In this section we give background on the depth of mod-$p$ cohomology rings of finite $p$-groups and we also state one of the key results for the proof of Theorem \ref{thm:mainresultintro}.

Let $n\geq 1$ be an integer number. We say that a sequence of elements $x_1,\dotsc,x_n\in \Ho(G;\Fp)$ is regular if, for every $i=1, \dots, n$, the element $x_i$ is not a zero divisor in the quotient $\Ho(G;\Fp)/(x_1,\dotsc,x_{i-1})$, where $(x_1,\dotsc,x_{i-1})$ denotes the ideal generated by the elements $x_1, \dots, x_{i-1}$ in $\Ho(G;\F_p)$.

\begin{definition} 
The \emph{depth} of $\Ho(G;\Fp)$, denoted by $\depth \Ho(G;\Fp)$, is the maximal length of a regular sequence in $\Ho(G;\Fp)$. 
\end{definition}

Recall that a prime ideal $\p\subseteq \Ho(G;\Fp)$ is an \emph{associated prime} of $\Ho(G;\Fp)$ if, for some $\varphi\in \Ho(G;\Fp)$, it is of the form 
\begin{displaymath}
\p=\{\psi\in \Ho(G;\Fp)\mid \varphi\cup\psi=0\}.
\end{displaymath}
The set of all associated primes of $\Ho(G;\Fp)$ is denoted by $\operatorname{Ass} \Ho(G;\Fp)$. It is known that for every $\p\in \operatorname{Ass}\Ho(G;\Fp)$, the following inequality holds 
	\begin{displaymath}
	\depth \Ho(G;\Fp)\leq \dim \Ho(G;\Fp)/\p.
	\end{displaymath}
In particular, $\depth \Ho(G;\Fp)\leq \dim \Ho(G;\Fp)$ (\cite[Proposition 12.2.5]{CarlsonBook03}) and, when the two values coincide, the mod-$p$ cohomology ring is said to be \emph{Cohen-Macaulay}. In the following proposition, we recall the lower and upper bounds for the depth of $\Ho(G;\F_p)$ by Duflot \cite{Duflot81} and Notbohm \cite{Notbohm09}, respectively.


\begin{proposition}\label{prop:DuflotNotbohm} Let $G$ be a finite $p$-group. The following inequalities hold
$$
1\leq \prk Z(G)\leq \depth \Ho(G;\Fp)\leq \depth \Ho\big(C_G(E);\Fp\big).
$$ 
\end{proposition}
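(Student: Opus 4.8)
The plan is to prove the two inequalities that are not already standard. The leftmost inequality $1\leq\prk Z(G)$ is immediate: a nontrivial finite $p$-group has nontrivial centre, and any nontrivial finite abelian $p$-group contains a subgroup isomorphic to $C_p$, hence $\prk Z(G)\geq 1$. So the real content is (i) Duflot's bound $\prk Z(G)\leq\depth\Ho(G;\F_p)$ and (ii) Notbohm's bound $\depth\Ho(G;\F_p)\leq\depth\Ho(C_G(E);\F_p)$ for every elementary abelian $E\leq G$. Since the excerpt attributes these to \cite{Duflot81} and \cite{Notbohm09}, I would either simply cite them or, if a self-contained argument is wanted, reconstruct the key ideas as follows.

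For Duflot's bound, the idea is to produce an explicit regular sequence of length $s=\prk Z(G)$. Pick an elementary abelian subgroup $E_0\cong C_p^{\,s}$ of $Z(G)$. The inclusion $E_0\hookrightarrow G$ is central, so $\Ho(G;\F_p)$ is a module over $\Ho(E_0;\F_p)$, and the crucial point is that $\Ho(G;\F_p)$ is \emph{free} as a module over a polynomial subalgebra of $\Ho(E_0;\F_p)$ on $s$ generators — this follows from the fact that the central extension $1\to E_0\to G\to G/E_0\to 1$ makes $\Ho(G;\F_p)$, via the Eilenberg–Moore or Lyndon–Hochschild–Serre spectral sequence, a free module over the Dickson-type invariants, the classical argument being that the relevant spectral sequence collapses because the differentials on the polynomial generators coming from $E_0$ can be killed by a change of generators (Duflot's original trick uses the fact that a central $p$-torus acts, and one invokes a localization/fixed-point argument, or Broto–Henn's streamlined version). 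Freeness over a polynomial algebra on $s$ generators immediately gives a regular sequence of length $s$, hence $\depth\Ho(G;\F_p)\geq s$.

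For Notbohm's bound, the approach is via the restriction map $\res^G_{C_G(E)}\colon\Ho(G;\F_p)\to\Ho(C_G(E);\F_p)$ together with the associated-prime characterization of depth recalled just before the proposition. The key input is Quillen's stratification / the structure of $\Ass\Ho(G;\F_p)$: to each elementary abelian $E\leq G$ one associates a prime $\p_E=\Ker\bigl(\Ho(G;\F_p)\to\Ho(E;\F_p)\bigr)$, and one shows that $\dim\Ho(G;\F_p)/\p_E$ is controlled by $\depth\Ho(C_G(E);\F_p)$; more precisely, Notbohm shows that $\p_E$ (or a prime lying over it) is an associated prime of a suitable module, so that the inequality $\depth\Ho(G;\F_p)\leq\dim\Ho(G;\F_p)/\p \leq \depth\Ho(C_G(E);\F_p)$ chains together. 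The cleanest route is to exhibit $\Ho(C_G(E);\F_p)$ as (up to the centralizer-of-$E$ summand) a direct summand of $\Ho(G;\F_p)$-modules after a suitable localization at $\p_E$, using that $C_G(E)$ controls fusion in a neighbourhood of $E$, and then quote the standard inequality $\depth M\leq\dim R/\p$ for $\p\in\Ass M$.

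The main obstacle is the freeness statement underpinning Duflot's inequality: establishing that $\Ho(G;\F_p)$ is free over a polynomial subalgebra on $\prk Z(G)$ generators requires either the equivariant/localization argument with the central $p$-torus (Duflot's original approach, which needs some care about passing from the finite group to a compact Lie-theoretic setting) or the spectral-sequence collapse argument of Broto–Henn, neither of which is a routine computation. For the purposes of this paper, however, the honest move is to cite \cite{Duflot81} and \cite{Notbohm09} directly, since both bounds are well established; the sketch above is only the route one would take if a self-contained proof were demanded.
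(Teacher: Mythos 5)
Your proposal is correct and matches the paper exactly: the paper gives no proof of this proposition, presenting it purely as a recollection of Duflot's and Notbohm's theorems with citations to \cite{Duflot81} and \cite{Notbohm09}, which is precisely the move you identify as the honest one. Your supplementary sketches of the underlying arguments are a reasonable bonus but are not required to match anything in the paper.
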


Before stating the crucial result for our construction, we introduce the concept of detection in cohomology.

\begin{definition}\label{def:detection}
	Let $G$ be a finite $p$-group and let $\HH$ be a collection of subgroups of $G$. We say that $\Ho(G;\Fp)$ is \emph{detected} by $\HH$ 
	if 
	\begin{displaymath}
	\bigcap_{H\in \HH}\Ker \res^G_H=0.
	\end{displaymath}
\end{definition}

Given a finite $p$-group $G$ and a subgroup $E\leq G$, let $C_G(E)$ denote the centralizer of $E$ in $G$. For $s\geq 1$, define
\begin{align*}
&\HH_s(G)=\big\{C_G(E) \mid E \text{ is an elementary abelian subgroup of } G,\; \prk E=s\big\},\\
&\omega_a(G)=\min \big\{\dim \Ho(G;\Fp)/\p\mid \p\in\operatorname{Ass}\Ho(G;\Fp)\big\},\\
&\omega_d(G)=\max\big\{s\geq 1\mid \Ho(G;\Fp) \text{ is detected by } \HH_s(G)\big\}.
\end{align*}

\begin{theorem}[\cite{Carlson95}]\label{thm:DepthCarlson}
	Let $G$ be a finite $p$-group. Then, the following inequalities hold
	\begin{displaymath}
	\depth\Ho(G;\Fp)\leq\omega_a(G)\leq \omega_d(G).
	\end{displaymath}
\end{theorem}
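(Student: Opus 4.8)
The plan is to treat the two inequalities separately, the first being essentially formal and the second carrying all the content. For $\depth\Ho(G;\Fp)\le\omega_a(G)$, I would invoke the bound already recorded before the statement: for every $\p\in\Ass\Ho(G;\Fp)$ one has $\depth\Ho(G;\Fp)\le\dim\Ho(G;\Fp)/\p$, which is the standard depth inequality for associated primes of a graded-commutative Noetherian ring. Since this holds simultaneously for all associated primes, I simply minimize the right-hand side to get $\depth\Ho(G;\Fp)\le\min_{\p}\dim\Ho(G;\Fp)/\p=\omega_a(G)$, with no further work needed.

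For $\omega_a(G)\le\omega_d(G)$ I would argue in contrapositive form, proving the implication: if $\Ho(G;\Fp)$ is \emph{not} detected by $\HH_s(G)$, then $\omega_a(G)<s$. Granting this, detection holds for every $s\le\omega_a(G)$ (the collections $\HH_s(G)$ being nonempty in that range, as $\omega_a(G)\le\prk G$), so the maximal detecting level satisfies $\omega_d(G)\ge\omega_a(G)$. The engine of the argument is one clean group-theoretic observation: if $F\le E$ are elementary abelian subgroups, then $E\le C_G(F)$ because $E$ is abelian, so functoriality of restriction gives $\res^G_E=\res^{C_G(F)}_E\circ\res^G_{C_G(F)}$. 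Therefore, if detection fails at level $s$, there is a nonzero class $\varphi$ (which I may take homogeneous, the kernels being homogeneous ideals) lying in $\bigcap_{\prk F=s}\Ker\res^G_{C_G(F)}$; choosing for each elementary abelian $E$ with $\prk E\ge s$ a rank-$s$ subgroup $F\le E$, the factorization forces $\res^G_E(\varphi)=0$. Thus $\varphi$ restricts to zero on \emph{every} elementary abelian subgroup of rank at least $s$.

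It remains to convert this vanishing into a dimension bound on an associated prime. Since $\varphi\ne0$, the ideal $R\varphi\isom R/\operatorname{Ann}(\varphi)$ is a nonzero submodule of $R=\Ho(G;\Fp)$, so $\Ass(R\varphi)\subseteq\Ass(R)$ is nonempty. I pick $\p\in\Ass(R\varphi)$, say $\p=\operatorname{Ann}(\psi)$ with $0\ne\psi=r\varphi\in R\varphi$. For every elementary abelian $E$ with $\prk E\ge s$ the multiplicativity of restriction gives $\res^G_E(\psi)=\res^G_E(r)\,\res^G_E(\varphi)=0$, so $\psi$ too vanishes on all such $E$. Quillen's stratification theorem then bounds the dimension of the support variety $V(\operatorname{Ann}\psi)$ by the elementary abelian subgroups on which $\psi$ survives, so $\dim R/\p=\dim V(\operatorname{Ann}\psi)\le\max\{\prk E:\res^G_E(\psi)\ne0\}<s$. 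As $\p\in\Ass(R)$, this yields $\omega_a(G)\le\dim R/\p<s$, which is exactly the implication sought.

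The main obstacle is the last step, namely passing from ``$\psi$ restricts to zero on all high-rank elementary abelians'' to ``$\dim R/\p<s$''. This is precisely where Quillen's F-isomorphism and the description of $\operatorname{Spec}\Ho(G;\Fp)$ as assembled from the elementary abelian subgroups are indispensable, and some care with nilpotent classes is required (working with the reduced ring, or using that restriction to the subgroups $E$ detects the support only up to nilpotents). By contrast, the first inequality and the restriction-factorization at the heart of the second are routine.
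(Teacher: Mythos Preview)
The paper does not supply its own proof of this theorem: the result is quoted from \cite{Carlson95} and invoked as a known fact, so there is no argument in the paper to compare your proposal against.

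For what it is worth, your outline is sound and follows the standard route. The first inequality is immediate from the fact the paper records just before the statement (namely $\depth\Ho(G;\Fp)\le\dim\Ho(G;\Fp)/\p$ for every $\p\in\Ass\Ho(G;\Fp)$). For the second, your contrapositive reduction, the factorization $\res^G_E=\res^{C_G(F)}_E\circ\res^G_{C_G(F)}$ whenever $F\le E$ are elementary abelian (so that $E\le C_G(F)$), and the passage to an associated prime of the cyclic submodule $R\varphi$ via $\Ass(R\varphi)\subseteq\Ass(R)$ are all correct. As you rightly flag, the only step with genuine content is the final Quillen input, converting ``$\psi$ restricts to zero on every elementary abelian of rank at least $s$'' into the bound $\dim R/\operatorname{Ann}(\psi)<s$; this is exactly where Carlson's original argument also invokes Quillen's stratification of $\operatorname{Spec}\Ho(G;\Fp)$, and in a complete write-up that step would need to be spelled out carefully (handling nilpotents via the reduced ring, as you indicate).
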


In fact, in the same article, J. F. Carlson conjectured that the previous inequalities are actual equalities.

\begin{conjecture}[Carlson]\label{conj:Carlson}
	Let $G$ be a finite $p$-group. Then,
	\begin{displaymath}
	\depth\Ho(G;\Fp)=\omega_a(G)= \omega_d(G).
	\end{displaymath}
\end{conjecture}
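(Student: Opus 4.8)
The inequalities $\depth\Ho(G;\Fp)\leq\omega_a(G)\leq\omega_d(G)$ of Theorem \ref{thm:DepthCarlson} are already at our disposal, so the whole conjecture collapses to a single reverse inequality: it suffices to prove that $\omega_d(G)\leq\depth\Ho(G;\Fp)$. Indeed, this would close the chain
\[
\depth\Ho(G;\Fp)\leq\omega_a(G)\leq\omega_d(G)\leq\depth\Ho(G;\Fp),
\]
forcing all three quantities to coincide. Hence the plan is to set $s=\omega_d(G)$ and to exhibit a regular sequence of length $s$ in $\Ho(G;\Fp)$; in other words, to show that detection by the centralizers in $\HH_s(G)$ is already strong enough to guarantee depth at least $s$.

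The natural bridge between depth and detection is the local cohomology characterization of depth. Writing $\mathfrak{m}$ for the maximal ideal of elements of positive degree, Grothendieck's theorem gives
\[
\depth\Ho(G;\Fp)=\min\{i\geq 0\mid \operatorname{H}^i_{\mathfrak{m}}(\Ho(G;\Fp))\neq 0\},
\]
so the goal becomes the vanishing $\operatorname{H}^i_{\mathfrak{m}}(\Ho(G;\Fp))=0$ for all $i<s$. To produce the required regular elements I would proceed as in Duflot's bound (Proposition \ref{prop:DuflotNotbohm}), starting from a representative $E$ of each conjugacy class with $\prk E=s$: since $E$ is central in $C_G(E)$, Duflot's theorem furnishes a regular sequence of length $s$ inside $\Ho\big(C_G(E);\Fp\big)$. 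The crux is then to assemble these local regular sequences, over the whole family $\HH_s(G)$, into global classes of $\Ho(G;\Fp)$ that remain regular.

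To organize this assembly I would invoke Quillen stratification together with the Greenlees local cohomology spectral sequence, which expresses $\operatorname{H}^i_{\mathfrak{m}}(\Ho(G;\Fp))$ in terms of data indexed by the poset of elementary abelian subgroups and their centralizers. The detection hypothesis of Definition \ref{def:detection}, namely $\bigcap_{\prk E=s}\Ker\res^G_{C_G(E)}=0$, controls precisely the bottom of this spectral sequence, and the aim would be to show that it forces the contributions in homological degrees below $s$ to cancel, yielding the sought vanishing. The action of the Steenrod algebra and Benson--Carlson duality for $\Ho(G;\Fp)$ would be the tools used to rigidify these contributions and to rule out spurious survivors.

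The hard part, and the reason the conjecture remains open, is exactly this transfer of depth across the detection map. Detection amounts only to the injectivity of the restriction
\[
\Ho(G;\Fp)\longrightarrow\prod_{E}\Ho\big(C_G(E);\Fp\big),
\]
and injectivity of a map of graded-commutative rings does not by itself propagate a lower bound on depth: the cokernel may carry low-degree local cohomology that drags $\depth\Ho(G;\Fp)$ strictly below $s$. For an arbitrary graded-commutative ring the inequality $\omega_d\leq\depth$ simply fails, so any proof must exploit features genuinely special to group cohomology, such as the module structure over the Steenrod algebra, the interplay of restriction and transfer, and the geometry of the Quillen stratification. Making these features cooperate uniformly across \emph{every} finite $p$-group is the obstacle that has so far resisted a general resolution, which is why the present paper instead establishes the equalities only for the explicit family $G_r$.
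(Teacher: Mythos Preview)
The statement you are attempting to prove is a \emph{conjecture}: the paper does not prove it, and indeed states immediately afterwards that it ``still remains open''. There is therefore no ``paper's own proof'' to compare against. What the paper actually establishes is Theorem~\ref{thm:mainresultintro}, which verifies the equalities $\depth\Ho(G_r;\Fp)=\omega_a(G_r)=\omega_d(G_r)=1$ only for the specific family $G_r$ with $1<r<p-1$, by constructing an explicit non-zero class $\theta_r\in\operatorname{H}^3(G_r;\Fp)$ that restricts trivially to every centralizer in $\HH_2(G_r)$.

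Your proposal is not a proof either, and to your credit you say so plainly in the final paragraph: the step ``transfer of depth across the detection map'' is exactly the missing idea, and you correctly identify why injectivity of the restriction map alone cannot yield the reverse inequality $\omega_d(G)\leq\depth\Ho(G;\Fp)$. The machinery you invoke (local cohomology, Greenlees spectral sequence, Quillen stratification, Steenrod operations) is the right circle of ideas in which a general proof would have to live, but none of it is made to do any actual work here; the sketch never produces a single regular element in $\Ho(G;\Fp)$, and the vanishing $\operatorname{H}^i_{\mathfrak{m}}=0$ for $i<s$ is asserted as an ``aim'' rather than derived. In short, what you have written is a well-informed discussion of why the conjecture is hard, not a proof of it, and the paper makes no claim to have done better in general.
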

A particular case of the above conjecture was proven by D. Green in \cite{Green03}.

\subsection{Yoneda extensions}

Let $G$ be a finite group and let $R$ be a commutative ring with unity. We describe the mod-$p$ cohomology ring $\Ho(G;R)$ in terms of Yoneda extensions and the Yoneda product. For a more detailed account on this topic, we refer to \cite[Chapter IV]{Maclane95} and \cite{Niwasaki92}.

\begin{definition}

Let $A$ and $B$ be $G$-modules. For every integer $n\geq 1$,  a \emph{Yoneda $n$-fold extension} $\varphi$ \emph{of $B$ by $A$} is an exact sequence of $G$-modules of the form
\[\varphi:\begin{tikzcd}
0 \rar & A \rar & M_n \rar & \cdots \rar & M_1 \rar & B \rar & 0.
\end{tikzcd}\]
Given two Yoneda $n$-fold extensions
\[\varphi:\begin{tikzcd}
0 \rar & A \rar & M_n \rar & \cdots \rar & M_1 \rar & B \rar & 0
\end{tikzcd}\]
and 
\[\varphi':\begin{tikzcd}
0 \rar & A' \rar & M_n' \rar & \cdots \rar & M_1' \rar & B' \rar & 0,
\end{tikzcd}\]
we say that there is a \emph{morphism between the Yoneda $n$-fold extensions $\varphi$ and $\varphi'$}, if there exist  $G$-module homomorphisms $f_0\colon B\longrightarrow B'$, $f_{n+1}\colon A\longrightarrow A'$ and, for every $i=1,\dots,n$, $f_i\colon M_i\longrightarrow M'_i$,  making the following diagram commute
\[
\begin{tikzcd}
\varphi: 0 \rar & A \dar{f_{n+1}} \rar & M_n \dar{f_n} \rar & \cdots \rar & M_1 \dar{f_1} \rar & B \dar{f_0} \rar & 0\\
\varphi': 0 \rar & A' \rar & M_n' \rar & \cdots \rar & M_1' \rar & B' \rar & 0.
\end{tikzcd}\]

In particular, if $\varphi, \varphi'$ are both Yoneda $n$-fold extensions of $A$ by $B$ and, there is a morphism from $\varphi$ to $\varphi'$ with identity maps $f_0=\id_B$ and $f_{n+1}=\id_A$, we write $\varphi\Rightarrow \varphi'$.
\end{definition}


\begin{definition}\label{def:Yequivalent}
Let $n\geq 1$ be an integer and let $\varphi$ and $\varphi'$ be as above. We say that $\varphi$ is \emph{equivalent} to $\varphi'$, denoted by $\varphi\equiv \varphi'$, if there are Yoneda $n$-fold extensions $\varphi_1,\dotsc,\varphi_r$ of $B$ by $A$ such that 
\begin{displaymath}
\varphi \Rightarrow \varphi_1  \Leftarrow \varphi_2 \Rightarrow \cdots \Leftarrow \varphi_{r-1} \Rightarrow \varphi_r \Leftarrow \varphi'.
\end{displaymath}
Moreover, we denote by $\YExt^n(B,A)$ the set of all Yoneda $n$-fold extensions of $B$ by $A$ up to equivalence.
\end{definition}

We recall the uniqueness of pushouts and pullbacks of Yoneda extensions \cite[Section II.6]{Hilton97} and endow the set $\YExt^n(B,A)$ with the Baer sum so that $\YExt^n(B,A)$ becomes an abelian group. The proof of the following result can be found in  \cite[Section IV.9]{Hilton97}.

\begin{proposition}
Let $\varphi\in \YExt^n(B,A)$ be represented by a Yoneda extension
\[\begin{tikzcd}
0 \rar & A \rar & M_n \rar & \cdots \rar & M_1 \rar & B \rar & 0.
\end{tikzcd}\]
\begin{enumerate}
\item [(a)]Given a $G$-module homomorphism $\alpha\colon A\longrightarrow A'$, there is a unique equivalence class $\alpha_*\varphi\in\YExt^n(B,A')$ represented by a Yoneda extension
\[\begin{tikzcd}
0 \rar & A' \rar & M_n' \rar & \cdots \rar & M_1' \rar & B \rar & 0,
\end{tikzcd}\]
admitting a morphism of Yoneda extensions of the following form:
\[\begin{tikzcd}
0 \rar & A \rar \dar{\alpha} & M_n \dar \rar & \dotsb \rar & M_1 \rar \dar & B \rar \dar[Equal] & 0 \\
0 \rar & A' \rar & M_n' \rar & \dotsb \rar & M_1' \rar & B \rar & 0.
\end{tikzcd}\]
We say that the Yoneda extension $\alpha_*\varphi$ is the \emph{pushout of $\varphi$ via $\alpha$}.
\item [(b)]Given a $G$-module homomorphism $\beta\colon B'\longrightarrow B$ there is a unique equivalence class $\beta^*\varphi\in\YExt^n(B',A)$ represented by a Yoneda extension 
\[\begin{tikzcd}
0 \rar & A \rar & M_n'' \rar & \cdots \rar & M_1'' \rar & B' \rar & 0,
\end{tikzcd}\]
admitting a morphism of Yoneda extensions of the following form:
\[\begin{tikzcd}
0 \rar & A \rar \dar[Equal] & M_n'' \dar \rar & \dotsb \rar & M_1'' \rar \dar & B' \rar \dar{\beta} & 0 \\
0 \rar & A \rar & M_n \rar & \dotsb \rar & M_1 \rar & B \rar & 0.
\end{tikzcd}\]
We say that the Yoneda extension $\beta^*\varphi$ is the \emph{pullback} of $\varphi$ via $\beta$.
\end{enumerate}
\end{proposition}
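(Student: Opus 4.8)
The plan is to prove both parts by the standard pushout/pullback constructions in the abelian category of $RG$-modules, splicing the resulting short exact sequence back into the given $n$-fold extension. I will describe part (a) in detail; part (b) is entirely dual.

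For existence, write the given extension as $0\to A\xrightarrow{\iota} M_n\xrightarrow{d_n} M_{n-1}\to\cdots\to M_1\to B\to 0$ and set $C=M_n/\Image\iota$. Form the pushout $M_n'$ of the pair $A'\xleftarrow{\alpha}A\xrightarrow{\iota}M_n$, which for modules is $M_n'=(A'\oplus M_n)/\{(\alpha(a),-\iota(a))\mid a\in A\}$, together with the structure maps $\iota'\colon A'\to M_n'$ and $\pi\colon M_n\to M_n'$. Since $\iota$ is a monomorphism, so is $\iota'$, and in the pushout square the cokernels of the two horizontal maps agree, yielding a short exact sequence $0\to A'\xrightarrow{\iota'}M_n'\to C\to 0$ with the same cokernel $C$. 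By exactness of $\varphi$ at $M_n$ one has $C=\Image d_n=\Ker d_{n-1}$, so $C$ embeds in $M_{n-1}$ and the composite $M_n'\to C\hooklongrightarrow M_{n-1}$ supplies a map $d_n'\colon M_n'\to M_{n-1}$ with $d_n'\circ\pi=d_n$. Splicing produces $\alpha_*\varphi\colon 0\to A'\xrightarrow{\iota'}M_n'\xrightarrow{d_n'}M_{n-1}\to\cdots\to M_1\to B\to 0$, and a short diagram check confirms exactness. The maps $\alpha$, $\pi$, and the identities on $M_{n-1},\dots,M_1,B$ then assemble into a morphism $\varphi\to\alpha_*\varphi$ of the required shape.

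For uniqueness, suppose $\psi\colon 0\to A'\to N_n\to\cdots\to N_1\to B\to 0$ also admits a morphism $(g_{n+1},g_n,\dots,g_1,g_0)$ from $\varphi$ with $g_{n+1}=\alpha$ and $g_0=\id_B$. Commutativity of the leftmost square gives $g_n\circ\iota=(A'\to N_n)\circ\alpha$, so $g_n$ and the map $A'\to N_n$ are compatible along the two legs of the pushout cone; the universal property of $M_n'$ then yields a unique $h\colon M_n'\to N_n$ with $h\circ\pi=g_n$ and $h\circ\iota'=(A'\to N_n)$. One checks that $h$, together with $g_{n-1},\dots,g_1$ and the identities on $A'$ and $B$, constitutes a morphism $\alpha_*\varphi\Rightarrow\psi$; the squares commute because $\pi$ and $\iota'$ are jointly epimorphic and both legs are compatible with the given data. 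Hence $\alpha_*\varphi\equiv\psi$, so the equivalence class is unique.

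Part (b) is obtained by dualizing: form the pullback $M_1''$ of $M_1\to B\xleftarrow{\beta}B'$, use that the pullback of an epimorphism is an epimorphism with isomorphic kernel, and splice on the $B$-end, the induced map $M_2\to M_1''$ again coming from the relevant universal property. The main obstacle in both parts is the uniqueness argument: one must verify that the map produced by the universal property of the pushout (resp. pullback) genuinely assembles into a morphism of $n$-fold extensions compatible with the equivalence relation $\equiv$ of Definition \ref{def:Yequivalent}, rather than merely an isomorphism of the single modified term. This rests precisely on the joint epi/mono property of the universal cone, and this is where the verification requires care.
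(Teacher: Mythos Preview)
The paper does not supply its own proof of this proposition; it simply cites \cite[Section IV.9]{Hilton97}. Your argument is correct and follows exactly the standard route found there: form the module pushout (resp.\ pullback) at the single affected slot, use that pushouts preserve monomorphisms and cokernels (resp.\ pullbacks preserve epimorphisms and kernels) to splice back into an $n$-fold exact sequence, and derive uniqueness from the universal property together with the joint epi (resp.\ mono) property of the structure maps. There is nothing to compare beyond noting that you have written out what the paper left to the reference.
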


Let now $A$ and $B$ be $G$-modules and let 
\[
\nabla_A\colon A\times A\longrightarrow A\;\; \text{and}\;\; \Delta_B\colon B\longrightarrow B\times B
\]
denote the codiagonal and the diagonal homomorphism, respectively.

\begin{definition}
Let $n\geq 1$ be an integer and let $\varphi,\varphi'\in\YExt^n(B,A)$ be two Yoneda extension classes.  We define the \emph{Baer sum} of $\varphi$ and $\varphi'$  as 
\begin{displaymath}
\varphi +\varphi'=(\nabla_A)_*(\Delta_B)^*(\varphi\times\varphi')\in \YExt^n(B,A).
\end{displaymath}
\end{definition}
Then, for every integer $n\geq1$, the set $\YExt^n(B,A)$ endowed with the Baer sum is an abelian group. Indeed, the zero element of $\YExt^1(B,A)$ is the split extension
\[\begin{tikzcd}
0 \rar & A \rar & A\times B \rar & B \rar & 0,
\end{tikzcd}\]
and for $n>1$, the zero element of $\YExt^n(B,A)$ is the Yoneda extension
\[\begin{tikzcd}[column sep=2em]
0 \rar & A \rar & A \rar & 0 \rar & \overset{n-2}{\vphantom{=}\smash{\cdots}} \rar & 0\rar & B \rar & B \rar & 0.
\end{tikzcd}\]

\begin{theorem}[{\cite[Theorem 6.4]{Maclane95}}]
	For every $G$-module $A$ and integer $n\geq 1$, there is a group isomorphism $\operatorname{H}^n(G;A)\isom \YExt^{n}(R,A)$ that is natural in  $A$.
\end{theorem}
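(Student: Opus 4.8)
The plan is to exhibit explicit, mutually inverse group homomorphisms between the two sides and then to verify naturality in $A$. Fix a projective resolution
\[
\cdots \longrightarrow P_{n+1}\xrightarrow{\,d_{n+1}\,} P_n \xrightarrow{\,d_n\,} P_{n-1}\longrightarrow\cdots\longrightarrow P_0\xrightarrow{\,\epsilon\,} R\longrightarrow 0
\]
of the trivial module $R$, so that by definition $\operatorname{H}^n(G;A)=\operatorname{H}^n\big(\Hom_G(P_*,A)\big)$.

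First I would construct a map $\Phi\colon\operatorname{H}^n(G;A)\longrightarrow\YExt^n(R,A)$. A cohomology class is represented by a cocycle $f\in\Hom_G(P_n,A)$, that is, a homomorphism with $f\circ d_{n+1}=0$. By exactness $\Ker d_n=\Image d_{n+1}$, so $f$ vanishes on $\Ker d_n$ and hence factors as $f=\bar f\circ d_n$ for a unique $\bar f\colon\Image d_n\longrightarrow A$. Truncating the resolution produces the Yoneda $n$-fold extension
\[
0\longrightarrow\Image d_n\longrightarrow P_{n-1}\longrightarrow\cdots\longrightarrow P_0\xrightarrow{\,\epsilon\,} R\longrightarrow 0
\]
(for $n=1$ read $\Image d_1=\Ker\epsilon$), and I define $\Phi$ to send the class of $f$ to the class of the pushout of this extension along $\bar f$, using the pushout construction recalled above. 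A coboundary $f=g\circ d_n$ with $g\colon P_{n-1}\longrightarrow A$ gives $\bar f=g|_{\Image d_n}$, which extends over $P_{n-1}$, so the corresponding pushout splits and represents $0$; hence $\Phi$ is well defined on cohomology classes.

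Next I would construct the inverse $\Psi\colon\YExt^n(R,A)\longrightarrow\operatorname{H}^n(G;A)$. Given a Yoneda extension $\varphi\colon 0\to A\to M_n\to\cdots\to M_1\to R\to 0$, I regard it as an exact complex augmented to $R$ and apply the comparison theorem: projectivity of the $P_i$ lifts $\id_R$ to a chain map, yielding $G$-homomorphisms $g_i\colon P_i\to M_{i+1}$ for $0\le i\le n-1$ together with $g_n\colon P_n\to A$. Commutativity of the lifting diagram forces $g_n\circ d_{n+1}=0$, so $g_n$ is a cocycle, and I set $\Psi(\varphi)=[g_n]$. To see this is well defined, note that two lifts of $\id_R$ are chain homotopic, so their top maps differ by a coboundary, and a morphism $\varphi\Rightarrow\varphi'$ induces compatible lifts; thus $\Psi$ respects the equivalence relation $\equiv$ and descends to $\YExt^n(R,A)$.

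It then remains to check that $\Phi$ and $\Psi$ are mutually inverse homomorphisms and that they are natural. The composite $\Psi\circ\Phi$ returns the original cocycle up to coboundary, directly from the factorization $f=\bar f\circ d_n$; conversely $\Phi\circ\Psi=\id$ because the lifting chain map, together with $g_n=\bar g_n\circ d_n$, is a morphism of Yoneda extensions from $\Phi(\Psi(\varphi))$ to $\varphi$ fixing $A$ and $R$, whence the two are equivalent by uniqueness of pushouts. That $\Phi$ is additive amounts to matching the Baer sum $\varphi+\varphi'=(\nabla_A)_*(\Delta_R)^*(\varphi\times\varphi')$ with the sum of cocycles, which follows since pushout and pullback along a fixed map are additive and the diagonal and codiagonal maps implement addition. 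For naturality, a homomorphism $\alpha\colon A\to A'$ induces post-composition with $\alpha$ on $\operatorname{H}^n(G;A)$ and the pushout $\alpha_*$ on $\YExt^n(R,A)$, and these agree under $\Phi$ because pushing out the truncated resolution along $\alpha\circ\bar f$ equals the pushout of $\Phi(f)$ along $\alpha$. The main obstacle is the package of well-definedness verifications: independence of $\Psi$ from the chosen lift needs the chain-homotopy part of the comparison theorem, the additivity statement is the most laborious diagram chase, and independence from the fixed resolution $P_*$ rests once more on the comparison theorem supplying a canonical homotopy class of isomorphisms.
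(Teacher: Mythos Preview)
The paper does not supply its own proof of this theorem: it is stated with a citation to \cite[Theorem 6.4]{Maclane95} and used as a black box. There is therefore nothing in the paper to compare your argument against.

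That said, your proposal is the standard proof, essentially the one given in Mac~Lane's \emph{Homology}. The construction of $\Phi$ via truncating the projective resolution and pushing out along the factored cocycle, and of $\Psi$ via the comparison theorem, are exactly the maps Mac~Lane uses; the well-definedness, additivity, and naturality checks you outline are the expected ones. Your sketch is correct as stated; the only minor imprecision is in the sentence ``$\Phi\circ\Psi=\id$ because the lifting chain map, together with $g_n=\bar g_n\circ d_n$, is a morphism of Yoneda extensions from $\Phi(\Psi(\varphi))$ to $\varphi$'': what you actually obtain is a morphism of extensions from the pushout of the truncated resolution to $\varphi$, and one then invokes the universal property of the pushout to see that this factors through $\Phi(\Psi(\varphi))$, giving the required arrow $\Phi(\Psi(\varphi))\Rightarrow\varphi$. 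This is a small clarification rather than a gap.
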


Let $A$, $B$ and $C$ be $G$-modules and let  $n,m\geq 1$ be integers. Given $\varphi\in\YExt^n(B,A)$ represented by the Yoneda extension
\[\begin{tikzcd}
0 \rar & A \rar & N_n \rar & \cdots \rar & N_1 \rar & B \rar & 0,
\end{tikzcd}\]
and $\varphi'\in\YExt^m(C,B)$ represented by the Yoneda extension
\[\begin{tikzcd}
0 \rar & B \rar & M_m \rar & \cdots \rar & M_1 \rar & C \rar & 0,
\end{tikzcd}\]
we define their \emph{Yoneda product} $\varphi\cup\varphi'\in \YExt^{n+m}(G,B)
$ as the Yoneda extension
\[\begin{tikzcd}[column sep=1.5em]
0 \rar & A \rar & N_n \rar & \cdots \rar & N_1 \rar & M_m \rar & \cdots \rar & M_1 \rar & C \rar & 0.
\end{tikzcd}\]
This product defines a bilinear pairing. In particular, the Yoneda product in $\Ho(G;R)\isom\YExt^{*}(R,R)$ coincides with the usual cup product (see \cite[Proposition 3.2.1]{Benson91}).

\subsection{Crossed extensions}

Let $G$ be a finite  group. In this section we describe, for every integer $n\geq 2$,  the cohomology group $\operatorname{H}^n(G; R)$ using crossed extensions. For more information about this subject, see \cite{Holt79}, \cite{Huebschmann80} and \cite{Niwasaki92}.

\begin{definition}
	Let $M_1$ and $M_2$ be groups with $M_1$ acting on $M_2$. A \emph{crossed module} is a group homomorphism $\rho\colon M_2\longrightarrow M_1$ satisfying the following properties:
\begin{enumerate}
	\item[(i)] $y_2^{\rho(y'_2)}=y_2^{y'_2}$ for all $y_2,y'_2\in M_2$, and
	\item[(ii)] $\rho(y_2^{y_1})=\rho(y_2)^{y_1}$ for all $y_1\in M_1$ and $y_2\in M_2$.
\end{enumerate}
\end{definition}

\begin{definition}
Let $n\geq 1$ be an integer and let $A$ be a $G$-module. A \emph{crossed $n$-fold extension $\psi$ of $G$ by $A$} is an exact sequence of groups of the form  
\[\begin{tikzcd}
\psi: 0 \rar & A \rar{\rho_{n}} & M_n \rar & \cdots \rar & M_2 \rar{\rho_1} & M_1 \rar & G \rar & 1,
\end{tikzcd}\]
satisfying the following conditions:
\begin{enumerate}
	\item[(i)] $\rho_1\colon M_2\longrightarrow M_1$ is a crossed module,
	\item[(ii)] $M_i$ is a $G$-module for every $i=3,\dotsc,n$, and
	\item[(iii)] $\rho_i$ is a $G$-module homomorphism for every $i=2,\dotsc,n$.
\end{enumerate} 
\end{definition}

\begin{definition}
A \emph{morphism of crossed $n$-fold extensions} $\psi$ and $\psi'$ is a morphism of exact sequences of groups 
\[\begin{tikzcd}[column sep=2em]
\psi: 0 \rar & A \dar{f_{n+1}} \rar & M_n \dar{f_n} \rar & \cdots \rar & M_2 \dar{f_2} \rar & M_1 \dar{f_1} \rar & G \dar{f_0} \rar & 1 \\
\psi': 0 \rar & A' \rar & M_n' \rar & \cdots \rar & M_2' \rar & M_1' \rar & G' \rar & 1,
\end{tikzcd}\]
where for each $i=3,\dotsc,n+1$, the morphism $f_i$ is a $G$-module homomorphism and, $f_1$ and $f_2$ are compatible with the actions of $M_1$ on $M_2$ and of $M_1'$ on $M_2'$, respectively. 

In particular, if $\psi$ and $\psi'$ are both crossed $n$-fold extensions of $G$ by $A$ and, there is a morphism from $\psi$ to $\psi'$ with identity maps $f_0=\id_G$ and $f_{n+1}=\id_A$, we write $\psi \Rightarrow \psi'$.
\end{definition}

 Moreover, we can define an equivalence relation on crossed $n$-fold extensions of $G$ by $A$ as for Yoneda extensions in Definition \ref{def:Yequivalent}, denoted by $\psi\equiv\psi'$. We will also denote by $\XExt^n(G,A)$ the set of all crossed $n$-fold extensions of $G$ by $A$ up to equivalence.

For the $n=2$ case, we can use the following characterization of equivalent crossed extensions.

\begin{proposition}[\cite{Holt79}]\label{prop: Equivalent Crossed X Diagram}
	Let $G$ be a finite group and let $A$ be a $G$-module. Then, two crossed $2$-fold extensions of $G$ by $A$
	\begin{footnotesize}
	\begin{equation*}
	\psi: 0 \longrightarrow  A \overset{\rho_2}\longrightarrow  M_2 \overset{\rho_1}\longrightarrow M_1 \overset{\rho_0}\longrightarrow  G \longrightarrow  1\;  \text{ and } \; \psi': 0 \longrightarrow  A \overset{\tau_2}\longrightarrow  N_2 \overset{\tau_1}\longrightarrow  N_1 \overset{\tau_0}\longrightarrow  G \longrightarrow  1,
	\end{equation*}
	\end{footnotesize}are equivalent if and only if there exist a group $X$ and a commutative diagram
	\begin{equation}\label{diag: Crossed diagram}
	\begin{tikzcd}[column sep={4.em,between origins},row sep=2em]
	& 1 \drar & & & & 1 & \\
	& & M_2 \drar[swap]{\mu_1} \arrow[rr,"\rho_1"] & & M_1 \drar{\rho_0} \urar & & \\
	0 \rar & A \urar{\rho_2} \drar[swap]{-\tau_2} & & X \urar[swap]{\nu_1} \drar{\nu_2} & & G \rar & 1 \\
	& & N_2 \urar{\mu_2} \arrow[rr,swap,"\tau_1"] & & N_1 \arrow[ur,swap,"\tau_0"] \drar & & \\
	& 1 \urar &  & & & 1 & 
	\end{tikzcd}
	\end{equation}
	satisfying the following properties: 
	\begin{itemize}
	\item[(a)] $-\tau_2\colon A\longrightarrow N_2$ is given by $(-\tau_2)(a)=\tau_2(-a)$ for $a\in A$, 
	\item[(b)] the diagonals are short exact sequences, 
	\item[(c)] $\mu_1\circ \rho_2(A)=\mu_1(M_2)\cap\mu_2(N_2)$, and 
	\item[(d)] conjugation in $X$ coincides with the actions of both $M_1$ on $M_2$ and $N_1$ on $N_2$.
	\end{itemize}
\end{proposition}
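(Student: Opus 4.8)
The statement is an equivalence, so I would establish the two implications separately, built around the observation that ``there is a group $X$ and a commutative diagram \eqref{diag: Crossed diagram} with properties (a)--(d)'' defines a relation on crossed $2$-fold extensions of $G$ by $A$ which I claim is an equivalence relation coinciding with $\equiv$.

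\textbf{From the diagram to an equivalence.} Assume such an $X$ exists. By~(b) the two diagonals are short exact sequences $1\to M_2\xrightarrow{\mu_1}X\xrightarrow{\nu_2}N_1\to1$ and $1\to N_2\xrightarrow{\mu_2}X\xrightarrow{\nu_1}M_1\to1$, and the relation $\rho_0\nu_1=\tau_0\nu_2$ gives a surjection $\pi\colon X\to G$ whose kernel is the normal subgroup $L:=\mu_1(M_2)\,\mu_2(N_2)$ (using exactness of $\psi$, $\psi'$ at $M_1$, $N_1$), with $\mu_1(M_2)\cap\mu_2(N_2)=\mu_1\rho_2(A)$ by~(c). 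From this data I would manufacture an intermediate crossed $2$-fold extension
\[
\psi'':\quad 0\longrightarrow A\xrightarrow{\;j\;}P\xrightarrow{\;q\;}X\xrightarrow{\;\pi\;}G\longrightarrow1,
\]
where $q\colon P\to X$ is a crossed module with image $L$ and kernel $\cong A$, built from the two embedded copies $\mu_1(M_2)\cong M_2$ and $\mu_2(N_2)\cong N_2$ (with $j$ induced by $\rho_2$ on one factor and by $-\tau_2$ on the other); the crossed-module axioms for $q$ are forced by the conjugation compatibility~(d), and exactness of $\psi''$ at $P$ and $X$ by~(c). Projecting $P$ onto $\mu_1(M_2)$ and $X$ onto $M_1$ gives a morphism $\psi''\Rightarrow\psi$ that is the identity on $A$ and $G$; projecting $P$ onto $\mu_2(N_2)$ and $X$ onto $N_1$ gives $\psi''\Rightarrow\psi'$, and it is precisely the sign in~(a) that makes this second map the identity on $A$ rather than its negative. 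Hence $\psi\Leftarrow\psi''\Rightarrow\psi'$, so $\psi\equiv\psi'$.

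\textbf{From an equivalence to the diagram.} For the converse I would verify that the relation ``$\exists X$'' is reflexive, symmetric and transitive, and contains every elementary morphism $\Rightarrow$; since by Definition~\ref{def:Yequivalent} $\equiv$ is the equivalence relation generated by the morphisms $\Rightarrow$, this yields $\psi\equiv\psi'\Rightarrow\exists X$. Symmetry is read off the shape of \eqref{diag: Crossed diagram} (interchange the two rows and replace $-\tau_2$ by $\rho_2$, using $-(-\tau_2)=\tau_2$). Given an elementary morphism $\psi\Rightarrow\psi'$ with comparison maps $f_1\colon M_1\to M_1'$ and $f_2\colon M_2\to M_2'$, I would take $X$ to be a suitable fibre product of the middle groups of $\psi$ and $\psi'$ formed along $f_1$ and $f_2$, with $\mu_1,\mu_2,\nu_1,\nu_2$ the evident structure maps: (b) and~(c) then come from the exactness of $\psi$ and $\psi'$, while~(d) comes from the compatibility of $f_1,f_2$ with the $M_1$- and $M_1'$-actions; reflexivity is the case $\psi'=\psi$, $f_i=\id$. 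It remains to compose: given $X$ connecting $\psi$ to an extension $\psi''$ and $Y$ connecting $\psi''$ to $\psi'$, I would take the fibre product of $X$ and $Y$ over the middle group $M_1''$ appearing in $\psi''$, divide out the antidiagonal copy of the corresponding ``$M_2''$'' (so that the kernel over $G$ of the composite is again an extension of $A$ and not of $A\times A$), and re-verify (a)--(d) for the resulting group.

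\textbf{Where the work is.} The delicate point is this composition step: one must arrange the fibre-product-then-quotient so that, after the construction, the two embedded ``$M_2$-type'' subgroups of the composite meet in exactly the image of $A$ (re-establishing~(c)) and conjugation in the composite still induces both prescribed crossed-module actions (re-establishing~(d)); the sign $-\tau_2$ in~(a) is what keeps the composition ``additive'' rather than degenerate, so the signs must be tracked carefully. A pervasive but routine burden is checking, at each step, that the various fibre products genuinely are groups carrying compatible actions, i.e.\ that the crossed-module axioms (i)--(ii) survive every construction. Alternatively, granting the classification $\XExt^2(G,A)\cong\operatorname{H}^3(G;A)$, one could instead show that $\psi$ and $\psi'$ represent the same class in $\operatorname{H}^3(G;A)$ exactly when such an $X$ exists; but producing a $3$-cocycle from the diagram \eqref{diag: Crossed diagram} involves essentially the same bookkeeping, so I would prefer the direct argument above.
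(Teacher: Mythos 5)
The paper gives no proof of this proposition: it is imported verbatim from Holt \cite{Holt79} (see also Huebschmann \cite{Huebschmann80}), so there is no internal argument to compare yours against. On its own terms, your sketch follows the standard route — essentially Holt's original argument, in modern language the ``butterfly'' characterisation of equivalences of crossed modules — and the overall logic is sound: from the diagram you manufacture an intermediate extension $\psi''$ with $\psi\Leftarrow\psi''\Rightarrow\psi'$, and conversely you show that the relation ``there exists such an $X$'' is an equivalence relation containing every elementary morphism $\Rightarrow$, hence containing $\equiv$.

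Three details need attention. First, for $q\colon P=M_2\times N_2\to X$, $(m,n)\mapsto\mu_1(m)\mu_2(n)$, to be a homomorphism you need $[\mu_1(M_2),\mu_2(N_2)]=1$; this does follow from (d), since $\mu_2(N_2)=\Ker\nu_1$ and the conjugation action of $X$ on $\mu_1(M_2)$ factors through $\nu_1$, but it should be stated — it is the one place where (d) is used beyond the crossed-module axioms for $q$. Second, with this $q$ the kernel is the \emph{diagonal} $\{(\rho_2(a),\tau_2(a)) : a\in A\}$: the sign $-\tau_2$ lives on the edge $A\to N_2$ of the diagram \eqref{diag: Crossed diagram}, not in the embedding $j$. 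Your formula ``$j$ induced by $\rho_2$ on one factor and by $-\tau_2$ on the other'' describes the antidiagonal, which is not $\Ker q$, so taken literally $\psi''$ fails exactness at $P$; and an embedding whose second component is $-\tau_2$ would make the induced map on $A$ of the morphism $\psi''\Rightarrow\psi'$ equal to $-\id_A$, proving only $\psi\equiv-\psi'$. Since you explicitly flag sign-tracking as the delicate point, this is a repair rather than a collapse, but it is exactly the sign that condition (a) exists to control. Third, in the elementary-morphism step the group you want is not a fibre product of the middle groups themselves but $X=M_1\ltimes M_2'$ (equivalently $M_1\times_{M_1'}(M_1'\ltimes M_2')$), with $\nu_2(m,n)=f_1(m)\tau_1'(n)$ and $\mu_1(m_2)=(\rho_1(m_2),f_2(m_2)^{-1})$; computing $\mu_1\rho_2$ here is precisely where $-\tau_2$ first appears. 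With these adjustments, and the fibre-product-modulo-diagonal composition of butterflies that you correctly single out as the main burden, the proof goes through.
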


Analogous to Yoneda extensions, for an integer $n\geq 1$, given an $n$-crossed extension $\varphi\in \XExt^n(G,A)$ and  a $G$-module homomorphism $\alpha\colon A\longrightarrow A'$, we can find a unique pushout $\alpha_*\varphi\in \XExt^n(G,A')$ of $\varphi$ via $\alpha$, and given a group homomorphism $\beta\colon G'\longrightarrow G$ we can find a unique pullback $\beta^*\varphi\in\XExt^n(G',A)$ of $\varphi$ via $\beta$ (see \cite[Proposition 4.1]{Holt79}).

We can also endow $\XExt^n(G,A)$ with an abelian group structure. Given two crossed $n$-fold extension classes $\varphi,\varphi'\in\XExt^n(G,A)$, we define their \emph{Baer sum} as 
\begin{displaymath}
\varphi +\varphi'=(\nabla_A)_*(\Delta_G)^*(\varphi\times\varphi').
\end{displaymath}

The zero element of $\XExt^1(G,A)$ is represented by the split extension
\[\begin{tikzcd}
0 \rar & A \rar & G\ltimes A \rar & G \rar & 1,
\end{tikzcd}\]
and for $n>1$, the zero element of $\XExt^n(G,A)$ is represented by the Yoneda extension
\[\begin{tikzcd}[column sep=2em]
0 \rar & A \rar & A \rar & 0 \rar & \overset{n-2}{\vphantom{=}\smash{\cdots}} \rar & 0\rar & G \rar & G \rar & 1.
\end{tikzcd}\]

\begin{theorem}[{\cite[Theorem 4.5]{Holt79}}]
	Let $G$ be a finite group. For every $G$-module $A$ and every integer $n\geq 1$, there is a group isomorphism $\operatorname{H}^{n+1}(G;A)\isom \XExt^{n}(G,A)$ that is natural in both $G$ and $A$.
\end{theorem}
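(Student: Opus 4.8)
The plan is to prove the isomorphism by induction on $n$, treating the base case $n=1$ by classical extension theory and propagating to higher $n$ by a dimension–shift argument over cohomologically trivial coefficient modules; naturality will be inherited from the pushout and pullback operations already available on $\XExt$.

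\emph{Base case.} For $n=1$, the claim reads $\operatorname{H}^{2}(G;A)\isom\XExt^{1}(G,A)$, i.e.\ the classical classification of group extensions of $G$ by the module $A$. I would define the comparison map explicitly: given a crossed $1$-fold extension $0\to A\to M_1\to G\to1$, choose a set-theoretic section $s\colon G\to M_1$ and form the factor set $f(g,h)=s(g)\,s(h)\,s(gh)^{-1}\in A$; one checks that $f$ is a $2$-cocycle, that replacing $s$ alters $f$ by a coboundary, and that equivalent extensions produce cohomologous cocycles, so the assignment descends to a well-defined map $\XExt^1(G,A)\to\operatorname{H}^2(G;A)$. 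The inverse sends a $2$-cocycle to the extension built on $A\times G$ with the twisted multiplication, and a direct comparison of the Baer sum with the addition of cocycles shows the map is a group isomorphism, natural in $A$ (via the codiagonal and pushout) and in $G$ (via pullback).

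\emph{Dimension shift.} For $n\ge2$ I would embed $A$ into a coinduced module $I=\Hom_{R}(RG,A)$, yielding a short exact sequence of $G$-modules $0\to A\to I\to C\to0$ with $I$ cohomologically trivial, so that $\operatorname{H}^{k}(G;I)=0$ and --- this is the key parallel claim --- $\XExt^{k}(G,I)=0$ for all $k\ge1$. The degree-raising connecting map is splicing: given a crossed $(n-1)$-fold extension of $G$ by $C$, prepend $0\to A\to I\to C\to0$ at the coefficient end to obtain a crossed $n$-fold extension of $G$ by $A$; using the uniqueness of pushouts and pullbacks and the Baer sum recalled above, this splice is well defined on equivalence classes and fits into a long exact sequence for $\XExt^{\bullet}(G,-)$. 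The vanishing of $\XExt^{k}(G,I)$ then forces the splice map $\XExt^{n-1}(G,C)\to\XExt^{n}(G,A)$ to be an isomorphism for $n\ge2$, exactly mirroring $\operatorname{H}^{n}(G;C)\isom\operatorname{H}^{n+1}(G;A)$ on the cohomology side. Chaining the inductive hypothesis $\XExt^{n-1}(G,C)\isom\operatorname{H}^{n}(G;C)$ with these two shifts gives $\XExt^{n}(G,A)\isom\operatorname{H}^{n+1}(G;A)$, and naturality propagates because every step is built from pushouts, pullbacks, and splicing, all of which are natural.

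\emph{Main obstacle.} The genuine difficulty is the non-abelian bottom of a crossed extension. For $n\ge3$ the terms $M_n,\dots,M_3$ are honest $G$-modules, so the splice and injectivity arguments run as in ordinary homological algebra; but the triple $M_2\to M_1\to G$ is a crossed module, and here one cannot simply extend $A\hookrightarrow I$ along $A\to M_2$ since $M_2$ need not be a $G$-module. Consequently the delicate steps are (i) the vanishing $\XExt^{1}(G,I)=\operatorname{H}^{2}(G;I)=0$, i.e.\ that every extension by a coinduced module splits, and (ii) exactness of the $\XExt$ long exact sequence at the transition $\XExt^{1}=\operatorname{H}^{2}$, where the crossed-module bookkeeping of Proposition \ref{prop: Equivalent Crossed X Diagram} (conditions (a)--(d), in particular that conjugation in the comparison group $X$ realises both module actions) must be matched with the cocycle description by hand. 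Once this interface between the crossed-module data and the linear homological machinery is verified, the induction closes. An alternative I would keep in reserve is to construct directly a natural degree-raising functor from crossed $n$-fold extensions to Yoneda $(n+1)$-fold extensions of the trivial module $R$ --- linearising the tail $M_2\to M_1\to G$ via the relation module of $M_1\twoheadrightarrow G$ --- and then invoke the already-cited Mac Lane isomorphism $\operatorname{H}^{n+1}(G;A)\isom\YExt^{n+1}(R,A)$; this bypasses the explicit long exact sequence at the cost of a more intricate linearisation.
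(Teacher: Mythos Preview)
The paper does not prove this theorem: it is stated with a citation to \cite[Theorem 4.5]{Holt79} and no proof is given. There is therefore nothing in the paper's own argument to compare your inductive dimension-shift against.

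That said, the paper does, in the paragraphs immediately following the theorem, spell out Conrad's explicit construction of the isomorphism $\Upsilon\colon\XExt^{n}(G,A)\to\YExt^{n+1}(R,A)$: one takes a representative with $M_2$ abelian, embeds $\Image\rho_1$ into an injective $G$-module $I$, uses the resulting splitting of the pushed-out extension to extract a $1$-cocycle $\tau$ with values in $I/\Image\rho_1$, and splices the associated Yoneda $1$-fold extension onto the abelian tail. This is precisely the ``alternative'' you sketch in your final paragraph, and it is what the paper actually relies on (since it needs the explicit $\Upsilon$ to compare Yoneda products). Your primary route via a long exact sequence for $\XExt^{\bullet}(G,-)$ and vanishing over coinduced coefficients is a legitimate strategy and is close in spirit to Holt's original argument, but it is not what the present paper does; the paper simply imports the result and then records the Conrad map for later use.
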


\section{Product between extensions}\label{sec:productsextensions}

\subsection{Product of Yoneda extensions and crossed extensions}

We now describe the Yoneda product between two cohomology classes, one of them represented by a Yoneda extension and the other one by a crossed extension.

\begin{definition}\label{def:mixedyonedaproduct}
Let $G$ be a finite group, let $A$ and $B$ be $G$-modules and let $n,m\geq 1$ be integer numbers. Given a Yoneda $n$-fold extension class $\varphi\in \YExt^n(A,B)$ represented by
\[\begin{tikzcd}
0 \rar & B \rar & N_n \rar
& \cdots \rar & N_1 \rar & A \rar & 0,
\end{tikzcd}\]
and a crossed $m$-fold extension class $\psi\in\XExt^m(G,A)$ represented by
\[\begin{tikzcd}
0 \rar & A \rar & M_m \rar & \cdots \rar & M_1 \rar & G \rar & 1,
\end{tikzcd}\]
we define their \emph{Yoneda product} $\varphi\cup\psi$ as the extension
\[\begin{tikzcd}[column sep=1.5em]
0 \rar & B \rar & N_n \rar
& \cdots \rar & N_1 \rar & M_m \rar & \cdots \rar & M_1 \rar & G \rar & 1.
\end{tikzcd}\]
\end{definition}

\begin{remark}
It can be readily checked that 
\[
\YExt^n(A,B)\times \XExt^m(G,A)\longrightarrow \XExt^{n+m}(G,B)
\]
given by $(\varphi, \psi)\mapsto \varphi \cup \psi$ is well defined.
\end{remark}

The following result shows that this product respects the pushouts and pullbacks.

\begin{lemma}
	Let $G$ and $G'$ be finite groups, let $A$, $A'$, $B$ and $B'$ be $G$-modules and let $n,m\geq 1$ be integer numbers. Let, moreover, $\varphi\in\YExt^n(A,B)$, $\varphi'\in\YExt^n(A',B)$ and $\psi\in\XExt^m(G,A)$. Then, the following relations are satisfied.
	\begin{enumerate}
		\item Given a $G$-module homomorphism $\alpha\colon A\longrightarrow A'$, we have that 
		\begin{displaymath}
			(\alpha^*\varphi')\cup\psi \equiv \varphi'\cup (\alpha_*\psi)\in \XExt^{n+m}(G,B).
		\end{displaymath}
		
		\item Given a $G$-module homomorphism $\beta\colon B\longrightarrow B'$, we have that
		\begin{displaymath}
			(\beta_*\varphi)\cup\psi\equiv\beta_*(\varphi\cup\psi)\in \XExt^{n+m}(G,B').
		\end{displaymath}
		
		\item Given a group homomorphism $\tau\colon G'\longrightarrow G$, we have that
		\begin{displaymath}
			\varphi\cup(\tau^*\psi)\equiv\tau^*(\varphi\cup\psi)\in \XExt^{n+m}(G',B).
		\end{displaymath}
	\end{enumerate}
\end{lemma}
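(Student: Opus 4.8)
The plan is to verify each of the three relations by tracing through the explicit construction of the Yoneda product in Definition \ref{def:mixedyonedaproduct} and the explicit constructions of pushouts and pullbacks from the earlier propositions. All three are diagram-chasing statements: I would set up the relevant ladders of exact sequences and check that the two sides of each claimed equivalence are connected by an explicit morphism of crossed $(n+m)$-fold extensions, which by definition gives a $\Rightarrow$ (hence an equivalence). First I would fix representatives: let $\varphi\in\YExt^n(A,B)$ be represented by $0\to B\to N_n\to\cdots\to N_1\to A\to 0$, let $\varphi'\in\YExt^n(A',B)$ be represented by $0\to B\to N'_n\to\cdots\to N'_1\to A'\to 0$, and let $\psi\in\XExt^m(G,A)$ be represented by $0\to A\to M_m\to\cdots\to M_1\to G\to 1$.

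For part (2), given $\beta\colon B\to B'$, the pushout $\beta_*\varphi$ is represented by a Yoneda extension $0\to B'\to N'_n\to\cdots\to N_{n-1}\to\cdots\to N_1\to A\to 0$ (only the first nonzero term changes, with a compatible morphism that is the identity below degree $n$). Splicing this with the representative of $\psi$ produces exactly the splice defining $\beta_*(\varphi\cup\psi)$, and the morphism of Yoneda extensions witnessing the pushout extends by identities to a morphism of the spliced crossed extensions; this gives $(\beta_*\varphi)\cup\psi\Rightarrow\beta_*(\varphi\cup\psi)$, hence the claimed equivalence. Part (3) is the mirror image on the other end: given $\tau\colon G'\to G$, the pullback $\tau^*\psi$ is represented by a crossed extension $0\to A\to M_m\to\cdots\to M_2\to M''_1\to G'\to 1$ whose witnessing morphism is the identity in degrees $\geq 2$; splicing the unchanged $\varphi$ on top and using that the top $n$ rungs of the morphism are identities yields $\varphi\cup(\tau^*\psi)\Rightarrow\tau^*(\varphi\cup\psi)$. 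The only mild care needed here is that the pullback in the crossed setting modifies $M_1$ (and the crossed-module structure $\rho_1$), so I must check the morphism is compatible with the $M_1$- and $M''_1$-actions on $M_2$, which is immediate from the construction of the crossed pullback in \cite[Proposition 4.1]{Holt79}.

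Part (1) is the genuinely substantive one and I expect it to be the main obstacle, because neither side is obtained from $\varphi'\cup\psi$ by a single pushout or pullback in a way that is visibly a $\Rightarrow$; instead the common modification happens in the \emph{middle} of the spliced sequence, at the junction between $N'_1$ (or $N_1$) and $M_m$. The strategy is to realize both $(\alpha^*\varphi')\cup\psi$ and $\varphi'\cup(\alpha_*\psi)$ as coming from a single ``two-sided'' extension built using the pullback $N''_1$ of $N'_1\to A'\xleftarrow{\alpha}A$ together with the pushout $M''_m$ of $A\xrightarrow{\alpha}A'\leftarrow M_m$. One then checks that both $(\alpha^*\varphi')\cup\psi$ and $\varphi'\cup(\alpha_*\psi)$ admit morphisms ($\Rightarrow$) to (or from) this intermediate extension, so that they are connected by a short zigzag $\Rightarrow\cdots\Leftarrow$, which is exactly what $\equiv$ requires. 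Concretely, the bottom row is
\[\begin{tikzcd}[column sep=1.2em]
0 \rar & B \rar & N'_n \rar & \cdots \rar & N'_2 \rar & N''_1 \rar & M_m \rar & \cdots \rar & M_1 \rar & G \rar & 1,
\end{tikzcd}\]
and one compares it to the analogous splice through $M''_m$; the maps are forced by the universal properties of the pullback and pushout, and commutativity of the resulting ladder is a routine verification. The one point that needs attention is exactness at the splice node: in the Yoneda product the image of $N_1\to A$ (here the image landing in $M_m$) is required to equal the image of $A\to M_m$, and one must confirm that replacing $A$ by the pullback/pushout construction preserves this, which follows because pullback along a surjection and pushout along an injection behave well with respect to these image conditions. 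Having assembled this zigzag, the equivalence $(\alpha^*\varphi')\cup\psi\equiv\varphi'\cup(\alpha_*\psi)$ follows.
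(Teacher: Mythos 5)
Your argument is correct and takes essentially the same route as the paper, whose proof simply declares (2) and (3) straightforward and refers to Mac\,Lane for (1); your pullback/pushout comparison at the splice junction, with the commuting square forced by the pullback and pushout diagrams, is precisely the \emph{mutatis mutandis} adaptation the authors intend. One small point of phrasing: in (2) and (3) the ladder you build is not literally a $\Rightarrow$ (it is $\beta$ on $B$, resp.\ $\tau$ on $G'\to G$, rather than the identity on both ends), so the equivalence follows from the uniqueness of the pushout (resp.\ pullback) class admitting such a morphism, not directly from the definition of $\equiv$.
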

\begin{proof}
	The proofs of 2 and 3 are straightforward. For 1, follow the proof of the analogous result for Yoneda extensions mutatis mutandis (compare \cite[Proposition III.5.2]{Maclane95}).
\end{proof}

\begin{proposition}
	Let $G$ be a finite group, let $A$ and $B$ be $G$-modules and let $n,m\geq 1$ be integers. Then, the Yoneda product induces a well-defined bilinear pairing
	\[\begin{tikzcd}
	\YExt^n(A,B) \otimes \XExt^m(G,A) \rar & \XExt^{n+m}(G,B).
	\end{tikzcd}\]
\end{proposition}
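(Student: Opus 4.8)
The plan is to show that the Yoneda product $\varphi \cup \psi$ introduced in Definition~\ref{def:mixedyonedaproduct} descends to a well-defined map on equivalence classes, is additive in each variable separately, and then conclude that it factors through the tensor product $\YExt^n(A,B) \otimes \XExt^m(G,A)$. The well-definedness on equivalence classes has already been asserted in the Remark following Definition~\ref{def:mixedyonedaproduct}, and the compatibility with pushouts and pullbacks is exactly the content of the preceding Lemma; so the real work here is the \emph{biadditivity}.

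First I would recall that addition in both $\YExt^n(A,B)$ and $\XExt^m(G,A)$ is given by the Baer sum, i.e.\ $\varphi + \varphi' = (\nabla_B)_* (\Delta_A)^* (\varphi \times \varphi')$ on the Yoneda side, and similarly with $\nabla_A$, $\Delta_G$ on the crossed side; here the external product $\varphi \times \varphi'$ is the obvious ``direct sum'' extension over $A \times A$ (resp.\ $G \times G$). The strategy for additivity in the first variable is to establish the ``interchange'' identity
\[
(\varphi \times \varphi') \cup (\psi \times \psi) \equiv (\varphi \cup \psi) \times (\varphi' \cup \psi)
\]
up to the natural identification of the base $G \times G$, together with the commutation relations $\tau^*(\varphi \cup \psi) \equiv \varphi \cup (\tau^* \psi)$ and $\alpha_*(\varphi \cup \psi) \equiv (\alpha_* \varphi) \cup \psi$ supplied by the Lemma. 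Concretely, for fixed $\psi \in \XExt^m(G,A)$ one computes
\[
(\varphi + \varphi') \cup \psi = \big((\nabla_B)_* (\Delta_A)^*(\varphi \times \varphi')\big) \cup \psi \equiv (\nabla_B)_* \big((\Delta_A)^*(\varphi \times \varphi') \cup \psi\big),
\]
and then one wants to move the pullback $(\Delta_A)^*$ past the product using part~1 of the Lemma (with $\Delta_A$ in the role of $\alpha$), trading it for a pushout $(\Delta_A)_* \psi$; after recognising $(\Delta_A)_*\psi$ together with the diagonal on $G$ as producing $\psi \times \psi$ pulled back along $\Delta_G$, one assembles everything into $(\varphi \cup \psi) + (\varphi' \cup \psi)$. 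Additivity in the second variable, $\varphi \cup (\psi + \psi') \equiv \varphi \cup \psi + \varphi \cup \psi'$, is handled by the mirror-image argument: expand the Baer sum on $\XExt$, use part~3 of the Lemma to push the pullback along $\Delta_G$ through the product $\varphi \cup (\psi \times \psi')$, and use part~2 to push the codiagonal $\nabla_A$ of the crossed side through, being careful that here $\nabla_A$ acts on the module sitting \emph{between} the Yoneda and crossed parts, so one first splices and then contracts.

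Once biadditivity is in place, the universal property of the tensor product over $\Z$ immediately yields a factorisation $\YExt^n(A,B) \otimes \XExt^m(G,A) \to \XExt^{n+m}(G,B)$, and bilinearity of the resulting map is exactly what has just been proved; well-definedness on the tensor product is then automatic. I expect the main obstacle to be purely bookkeeping: the interchange identity relating a product of direct-sum extensions with the direct sum of products is ``obvious'' at the level of the underlying exact sequences, but verifying that the crossed-module structure on the bottom two terms (the $M_1$-action on $M_2$, condition~(i)-(ii) in the definition of crossed module) is respected by all the pushout/pullback manipulations — and that splicing a Yoneda extension onto a crossed extension again satisfies those axioms after Baer-summing — requires care. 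The cleanest route is to quote the analogous biadditivity statement for pure Yoneda products (\cite[Chapter~IV]{Maclane95}) and for pure crossed products, and then observe that the splicing operation of Definition~\ref{def:mixedyonedaproduct} is compatible with Baer sums because on the Yoneda part nothing touches the crossed-module structure at the bottom, while on the crossed part the top $N_i$'s are honest $G$-modules, so the interchange reduces to the module-level statement already known.
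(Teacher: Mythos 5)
Your proposal follows essentially the same route as the paper's proof: both expand the Baer sums, invoke the compatibility lemma for pushouts and pullbacks together with the identity $(\Delta_A)_*\psi\equiv(\Delta_G)^*(\psi\times\psi)$, and then apply the interchange identity $(\varphi\times\varphi')\cup(\psi\times\psi)\equiv(\varphi\cup\psi)\times(\varphi'\cup\psi)$ before contracting with $(\nabla_B)_*(\Delta_G)^*$. The paper is exactly as terse as you are about verifying the interchange identity and the crossed-module bookkeeping, so your argument is a faithful (if slightly more explicit) match.
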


\begin{proof}
	Let $\varphi,\varphi'\in \YExt^n(A,B)$ and $\psi,\psi'\in\XExt^m(G,A)$. On the one hand, using that $(\Delta_A)_*\psi\equiv(\Delta_G)^*(\psi\times\psi)$, we have that 
	\begin{align*}
		(\varphi + \varphi')\cup \psi & \equiv \big[(\nabla_B)_*(\Delta_A)^*(\varphi\times\varphi')\big]\cup \psi \\
		& \equiv (\nabla_B)_*(\Delta_G)^* \big[(\varphi\times\varphi')\cup(\psi\times\psi)\big] \\
		& \equiv (\nabla_B)_*(\Delta_G)^* \big[(\varphi\cup\psi)\times(\varphi'\cup\psi)\big] \\
		& \equiv \varphi\cup \psi + \varphi'\cup \psi.
	\end{align*}
	On the other hand, we have that 
	\begin{align*}
		\varphi\cup (\psi+\psi') & \equiv \varphi \cup \big[(\nabla_A)_*(\Delta_G)^*(\psi\times\psi')\big] \\
		& \equiv (\nabla_B)_*(\Delta_G)^* \big[(\varphi\times\varphi)\cup(\psi\times\psi')\big] \\
		& \equiv (\nabla_B)_*(\Delta_G)^* \big[(\varphi\cup\psi)\times(\varphi\cup\psi')\big] \\
		& \equiv \varphi\cup \psi + \varphi\cup \psi'.
	\end{align*}
\end{proof}

\subsection{Yoneda and cup products coincide}

In order to show that the Yoneda product of Yoneda extensions with crossed extensions coincides with the usual cup product, we will follow a construction by B. Conrad \cite{conrad}, giving an explicit correspondence between crossed extensions and Yoneda extensions.

Let $G$ be a finite group and let $A$ be a $G$-module. Let $\psi\in\XExt^n(G,A)$ be a class represented by a crossed $n$-fold extension 
\[\begin{tikzcd}
0 \rar & A \rar{\rho_n} & M_n \rar & \cdots \rar & M_2 \rar{\rho_1} & M_1 \rar{\rho_0} & G \rar & 1,
\end{tikzcd}\]
with $M_2$ abelian (such a representative always exists, see \cite[Proposition 2.7]{Holt79}). Consider the $G$-module $\Image\rho_1\leq M_1$. Then, we have an extension $\psi_0\in\XExt^1(G,\Image\rho_1)$ of the form
\[\begin{tikzcd}
\psi_0: \;\; 0 \rar & \Image \rho_1 \rar & M_1 \rar & G \rar & 1. 
\end{tikzcd} \label{ext: conrad}\]
Now, we can embed $\Image \rho_1$ into an injective $G$-module $I$. As $I$ is injective, we have that $\XExt^1(G,I)\isom \operatorname{H}^2(G,I)=0$, and so the pushout of $\psi_0$ 
via the embedding of $\Image\rho_1$ into $I$ splits, i.e., there is a group homomorphism $\Phi\colon M_1\longrightarrow G\ltimes I$ such that the following diagram commutes:  
\[\begin{tikzcd}
0 \rar & \Image \rho_1 \rar \dar[hook] & M_1 \rar{\rho_0} \dar{\Phi} & G \rar \dar[Equal] & 1 \\
0 \rar & I \rar & G\ltimes I \rar & G \rar & 1.
\end{tikzcd}\]
We can find a group homomorphism $\nu\colon M_1\longrightarrow G$ and a map $\chi\colon M_1\longrightarrow I$ that for every $x,y\in M_1$ satisfies 
\begin{align} \label{eqn: cocycle cond semidir}
\chi(xy)=\chi(x)^{\nu(y)}\chi(y),
\end{align}
such that for every $x,y\in M_1$ we can write
\begin{displaymath}
\Phi(x)=\big(\nu(x),\chi(x)\big).
\end{displaymath}

Moreover, if we denote by $\pi\colon I\longrightarrow I/\Image\rho_1$ the canonical projection,  there is a unique map $\tau\colon G\longrightarrow I/\Image\rho_1$ such that $\tau\circ \nu=\pi\circ \chi$. Furthermore, because $\chi$ satisfies (\ref{eqn: cocycle cond semidir}) and $\nu=\rho_0$ is surjective, we have that for every $g,h\in G$,
\begin{displaymath}
	\tau(gh)=\tau(g)^h+\tau(h),
\end{displaymath}
and so $\tau$ is a $1$-cocycle. 
Hence, $\tau$ can be represented as a cohomology class in $\operatorname{H}^1(G,I/\Image\rho_1)\isom \YExt^1(R,I/\Image\rho_1)$ by a Yoneda extension of the form 
\[\begin{tikzcd}
0 \rar & I/\Image\rho_1 \rar & E_{\tau} \rar & R \rar & 0.
\end{tikzcd}\]

\begin{remark}\label{rmk: conrad choice r1}
	The choices of the $G$-module $I$ and the cocycle $\tau$, and consequently $E_{\tau}$, only depend on $\Image\rho_1\leq M_1$.
\end{remark}

Finally, we can construct the element $\Upsilon(\psi)\in\YExt^{n+1}(R,A)$ given by the Yoneda extension
\begin{equation}\label{eq:Upsilon}
0 \longrightarrow  A \longrightarrow  M_n \longrightarrow  \cdots \longrightarrow  M_2 \longrightarrow  I \longrightarrow  E_{\tau} \longrightarrow  R \longrightarrow  0.
\end{equation}
This construction gives rise to a group isomorphism $$\Upsilon\colon \XExt^n(G,A)\longrightarrow \YExt^{n+1}(R,A).$$

\begin{proposition}
	Let $G$ be a finite group and let $n,m\geq 1$ be integer numbers. Then, the Yoneda product 
	\[\begin{tikzcd}
	\YExt^n(A,B) \otimes \XExt^m(G,A) \rar & \XExt^{n+m}(G,B)
	\end{tikzcd}\]
	coincides with the Yoneda product
	\[\begin{tikzcd}
	\YExt^n(A,B) \otimes \YExt^{m+1}(R,A) \rar & \XExt^{n+m+1}(R,B).
	\end{tikzcd}\]
	In particular, if $A=B=R$, the above product coincides with the cup product
	\[\begin{tikzcd}
	\cup: \;\operatorname{H}^n(G;R) \otimes \operatorname{H}^{m+1}(G;R) \rar & \operatorname{H}^{n+m+1}(G;R).
	\end{tikzcd}\]
\end{proposition}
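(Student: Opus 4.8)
The plan is to reduce the claimed equality of the two pairings to a naturality statement about the isomorphism $\Upsilon$, and then to verify that statement by chasing Conrad's construction. Concretely, fix $\varphi \in \YExt^n(A,B)$ and $\psi \in \XExt^m(G,A)$. The first (cross-extension) pairing produces $\varphi \cup \psi \in \XExt^{n+m}(G,B)$, and the second pairing is applied to $\varphi$ and $\Upsilon(\psi) \in \YExt^{m+1}(R,A)$, producing $\varphi \cup \Upsilon(\psi) \in \YExt^{n+m+1}(R,B)$. So the statement to prove is exactly
\[
\Upsilon(\varphi \cup \psi) \equiv \varphi \cup \Upsilon(\psi) \quad \text{in } \YExt^{n+m+1}(R,B),
\]
after which the final ``in particular'' follows by specializing $A=B=R$ and invoking the already-cited fact (\cite[Proposition 3.2.1]{Benson91}) that on $\Ho(G;R) \isom \YExt^*(R,R)$ the Yoneda product is the cup product, together with the fact that $\Upsilon$ realizes $\Ho^{n+1}(G;R) \isom \YExt^{n+1}(R,R)$ compatibly with the earlier isomorphisms $\Ho^{n+1}(G;A)\isom\XExt^n(G,A)$ and $\Ho^n(G;A)\isom\YExt^n(R,A)$ of Mac\,Lane and Holt.

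The key steps, in order. First, I would choose a representative of $\psi$ of the form $0 \to A \to M_m \to \cdots \to M_2 \xrightarrow{\rho_1} M_1 \xrightarrow{\rho_0} G \to 1$ with $M_2$ abelian, as permitted by \cite[Proposition 2.7]{Holt79}, and run Conrad's recipe: embed $\Image\rho_1$ in an injective $G$-module $I$, split the pushout to obtain $\nu, \chi$ satisfying \eqref{eqn: cocycle cond semidir}, extract the $1$-cocycle $\tau\colon G \to I/\Image\rho_1$ and the Yoneda $1$-extension $0 \to I/\Image\rho_1 \to E_\tau \to R \to 0$, so that $\Upsilon(\psi)$ is the splice \eqref{eq:Upsilon}. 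Second, I would write down the crossed $(n+m)$-fold extension $\varphi \cup \psi$ by Definition \ref{def:mixedyonedaproduct}: it is $0 \to B \to N_n \to \cdots \to N_1 \to M_m \to \cdots \to M_1 \to G \to 1$, where $N_\bullet$ is a representing Yoneda extension of $A$ by $B$ for $\varphi$ and the splice map $N_1 \to M_m$ is the composite $N_1 \to A \hookrightarrow M_m$. Third, I would observe that the image of the second differential of this spliced crossed extension is still $\Image\rho_1 \leq M_1$ — because everything to the left of $M_1$ is unchanged at the level of $M_1$, and the $G$-module $\Image(N_1 \to M_m \to \cdots)$ feeding into $M_2$ does not affect $\Image\rho_1 \subseteq M_1$. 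By Remark \ref{rmk: conrad choice r1}, the choices of $I$, $\tau$ and $E_\tau$ therefore coincide with those made for $\psi$ itself. Fourth, applying Conrad's recipe to $\varphi \cup \psi$ yields
\[
\Upsilon(\varphi\cup\psi):\quad 0 \to B \to N_n \to \cdots \to N_1 \to M_m \to \cdots \to M_2 \to I \to E_\tau \to R \to 0,
\]
and this is manifestly the Yoneda splice of $\varphi$ (the $N_\bullet$ part) with the Yoneda $(m+1)$-extension $0 \to A \to M_m \to \cdots \to M_2 \to I \to E_\tau \to R \to 0$, which is precisely $\Upsilon(\psi)$. Hence $\Upsilon(\varphi\cup\psi) \equiv \varphi \cup \Upsilon(\psi)$, using that the Yoneda product of Yoneda extensions is associative up to equivalence and independent of the chosen representatives. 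Finally, I would deduce the three displayed identifications of products by combining this with the cited isomorphisms, being careful that $\Upsilon$ is exactly the composite of Holt's $\Ho^{n+1}(G;A)\isom\XExt^n(G,A)$ with the inverse of Mac\,Lane's $\Ho^{n+1}(G;A)\isom\YExt^{n+1}(R,A)$, so that transporting the cross-extension pairing along $\Upsilon$ in the second slot gives literally the Yoneda pairing on $\YExt$'s, and on coefficients $R$ the latter is the cup product.

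The main obstacle I expect is the third step: showing that splicing $\varphi$ onto the left of $\psi$ genuinely does not disturb the data $(\Image\rho_1, I, \tau, E_\tau)$ that Conrad's construction reads off from the right-hand end. One has to be careful that the representative of $\varphi\cup\psi$ one feeds into the recipe still has abelian ``$M_2$''-term (here $M_2$ itself, which is already abelian by our choice — so this is fine, since $M_2$ is untouched), and that ``$\Image\rho_1$'' computed in $\varphi\cup\psi$ really is the same subgroup of the same group $M_1$; this is where Remark \ref{rmk: conrad choice r1} does the real work, reducing the entire comparison to an identity of spliced exact sequences rather than a cocycle computation. A secondary, purely bookkeeping obstacle is matching conventions (signs in the Baer sum, left/right actions, the codiagonal/diagonal in Definition \ref{def:mixedyonedaproduct} via the earlier bilinearity Proposition) so that the two pairings agree on the nose and not merely up to an automorphism of $\YExt^{n+m+1}(R,B)$; by bilinearity of both pairings it suffices to check agreement on representatives, which the splice identity above provides.
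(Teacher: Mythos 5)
Your proposal is correct and follows essentially the same route as the paper: reduce the claim to the identity $\Upsilon(\varphi\cup\psi)\equiv\varphi\cup\Upsilon(\psi)$, note via Remark~\ref{rmk: conrad choice r1} that Conrad's data $(I,\tau,E_\tau)$ depend only on $\Image\rho_1\leq M_1$, which splicing $\varphi$ on the left does not disturb (for $m=1$ because $\Image(\rho_1\circ\mu_0)=\Image\rho_1$, $\mu_0$ being surjective), and then read off that both sides are the same spliced sequence. The paper merely makes the case split $m>1$ versus $m=1$ explicit, which your third step and closing discussion already cover.
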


\begin{proof}
	Let $\varphi\in \YExt^n(A,B)$ be a class represented by an extension
	\[\begin{tikzcd}
	0 \rar & B \rar & N_n \rar
	& \cdots \rar & N_1 \rar{\mu_0} & A \rar & 0,
	\end{tikzcd}\]
	and let $\psi\in\XExt^m(G,A)$ be a class represented by an extension 
	\[\begin{tikzcd}
	0 \rar & A \rar{\rho_m} & M_m \rar & \cdots \rar & M_2 \rar{\rho_1} & M_1 \rar & G \rar & 1,
	\end{tikzcd}\]
	with $M_2$ abelian. We need to prove that $\Upsilon(\varphi\cup\psi)=\varphi\cup\Upsilon(\psi)$. 
	
	By \eqref {eq:Upsilon}, for $m>1$, the extension $\Upsilon(\psi)\in\YExt^{m+1}(R,A)$ is of the form
	\[\begin{tikzcd}[column sep=2em]
	0 \rar & A \rar & M_m \rar & \cdots \rar & M_2 \rar & I \rar & E_{\tau} \rar & R \rar & 0,
	\end{tikzcd}\]
	and $\varphi\cup\psi\in\XExt^{n+m}(G,A)$ is represented by the crossed $(n+m)$-fold extension
	\[\begin{tikzcd}[column sep=1.5em]
	0 \rar & B \rar & N_n \rar & \cdots \rar & N_1 \rar & M_m \rar & \cdots \rar & M_1 \rar & G \rar & 1.
	\end{tikzcd}\] 
	By Remark~\ref{rmk: conrad choice r1}, we can use the same $I$ and $\tau$ in the construction of $\Upsilon(\psi)$. 
	Therefore, $\Upsilon(\varphi\cup\psi)\in\YExt^{n+m+1}(G,A)$ is represented by 
	\[\begin{tikzcd}[column sep=0.95em]
	0 \rar & B \rar & N_n \rar & \cdots \rar & N_1 \rar & M_m \rar & \cdots \rar & M_2 \rar & I \rar & E_{\tau} \rar & R\rar & 0,
	\end{tikzcd}\] 
	which coincides with $\varphi\cup\Upsilon(\psi)$.

	For $m=1$, we have that $\psi\in\XExt^1(G,A)$ is represented by a crossed 1-fold extension of the form
	\[\begin{tikzcd}
	0 \rar & A \rar{\rho_1} & M_1 \rar & G \rar & 1.
	\end{tikzcd}\]
	Then, $\varphi\cup\psi$ is given by the crossed $(n+1)$-fold extension 
	\[\begin{tikzcd}
	0 \rar & B \rar & N_n \rar
	& \cdots \rar & N_1 \rar{\gamma_1} & M_1 \rar & G\rar & 1,
	\end{tikzcd}\] 
	where $\gamma_1=\rho_1\circ \mu_0$. Now, we have that $\Image \gamma_1= \Image \rho_1$, and so we can once again use the same $I$ and $\tau$ in the construction of both $\Upsilon(\psi)$ and  $\Upsilon(\varphi\cup\psi)$. Therefore, both $\varphi\cup\Upsilon(\psi)$ and  $\Upsilon(\varphi\cup\psi)$ are given by the same extension
	\[\begin{tikzcd}[column sep=2em]
	0 \rar & B \rar & N_n \rar
	& \cdots \rar & N_1 \rar & I \rar & E_{\tau} \rar & R\rar & 0.
	\end{tikzcd}\]
	
	Finally, if $A=B=R$ then the Yoneda product of Yoneda extensions coincides with the cup product of cohomology classes.
\end{proof}

\section{Finite $p$-groups of depth one mod-$p$ cohomology}\label{sec:depthonepgroups}

Let $p$ be an odd prime. For each integer $r$ with $1<r<p-1$, the finite $p$-group $G_r$, described in \eqref{eq:Grassemidirectproduct}, is generated by the elements $\sigma,a_1,\dotsc,a_r $ satisfying the following relations:
\begin{itemize}
\item $\sigma^p=a_i^p=[a_i,a_j]=[a_r,\sigma]=1$, for $i=1,\dotsc,r$ and $j=1,\dotsc,r-1$,
\item $[a_j,\sigma]=a_{j+1}$ for $j=1,\dotsc,r-1$.
\end{itemize}

The aim of this section is to prove Theorem \ref{thm:mainresultintro}. Consider the elementary abelian $p$-group $E=\gen{\sigma,a_r}$ with centralizer $C_{G_r}(E)$ in $G_r$ equal to $E$. By Proposition \ref{prop:DuflotNotbohm}, we have that 
\begin{equation}\label{eq:lowerupperbound}
1=\rk_p(Z(G_r))\leq \depth \Ho(G_r;\Fp)\leq \depth \Ho(C_{G_r}(E);\F_p)=2.
\end{equation}

To show the result, we construct a non-trivial mod-$p$ cohomology class of $G_r$ that restricts trivially to the mod-$p$ cohomology of the centralizers of all rank 2 elementary abelian subgroups of $G_r$. Then, $\omega_d(G_r)=1$ and Theorem \ref{thm:DepthCarlson} yields that $\depth \Ho(G_r;\F_p)=1$.

\subsection{Construction}\label{sec:constructionthetar}

We follow the assumptions in \nameref{notation} and, additionally, suppose that $p>3$. In this section, we construct for each integer $r$ with $1<r<p-1$, a cohomology class $\theta_r \in \operatorname{H}^3(G_r;\F_p)$ that is a cup product of a Yoneda 1-fold extension and a crossed $2$-fold extension.

We start by defining a cohomology class $\sigma^*\in \operatorname{H}^1(G_r;\F_p)=\Hom(G_r,\F_p)$. To that aim, for each $r$, consider the homomorphism $\sigma^*\colon G_r\longrightarrow \Fp$ satisfying
\begin{align*}
\sigma^*(\sigma)=1, \quad \sigma^*(a_1)=\dotsb =\sigma^*(a_r)=0.
\end{align*}
The class $\sigma^*$ can be represented by the Yoneda extension 
\[\begin{tikzcd}
1 \rar & C_p=\gen{a_{r+2}} \rar & C_p\times C_p \rar & C_p=\gen{a_{r+1}} \rar & 1,
\end{tikzcd}\]
where the action of $G_r$ on $C_p\times C_p=\gen{a_{r+1},a_{r+2}}$ is described by 
\begin{displaymath}
\text{for}\; \; g\in G_r, \;\; \text{set}\;\; a_{r+1}^{g}= a_{r+1}a_{r+2}^{\sigma^*(g)}, \quad a_{r+2}^{g}=a_{r+2}.
\end{displaymath}

We continue by defining a crossed 2-fold extension $\eta_r\in \operatorname{H}^2(G_r;\F_p)$ as follows. For $r>1$, let 
\[
\lambda_r\colon T_0/T_{r+1}\times T_0/T_{r+1}\longrightarrow T_0/T_{r+1}
\]
be the alternating bilinear map satisfying $\lambda_r(a_{r-1},a_{r})=a_{r+1}$. Now, define $(T_0/T_{r+1})_{\lambda_r}$ to be the group with underlying set  $T_0/T_{r+1}$ and with group operation given by 
\begin{displaymath}
\text{for} \;\; x,y \in T_0/T_{r+1}\;\; \text{we define,}\;\;  x\cdot_{\lambda_r}y= xy\lambda_r(x,y)^{1/2}.
\end{displaymath}
Finally, define the $p$-group $\widehat G_r=C_p\ltimes (T_0/T_{r+1})_{\lambda_r}$ of size $\abs{\widehat G_r}=p^{r+2}$ and exponent $p$.
Let $\eta_r\in \operatorname{H}^2(G_r,\Fp)$ be the cohomology class represented by the crossed 2-fold extension
\begin{equation}\label{eq:etar}
1 \longrightarrow  C_p=\gen{a_{r+1}} \longrightarrow  \widehat G_r \longrightarrow  G_r \longrightarrow  1.
\end{equation}
Then, we define the cohomology class  $\theta_r=\sigma^*\cup\eta_r\in \operatorname{H}^3(G_r;\Fp)$, which is represented by the crossed 3-fold extension
\begin{equation}\label{eq:thetar}
 1 \longrightarrow  C_p \longrightarrow  C_p\times C_p \longrightarrow  \widehat G_r \longrightarrow  G_r \longrightarrow  1.
\end{equation}

\subsection{Non-triviality}

In the present section we will show that the cohomology class $\theta_r$ described in \eqref{eq:thetar} is non-trivial.

\begin{proposition}\label{prop: theta =/= 0}
	Let $p>3$ be a prime number. For each integer $r$ with $1<r<p-1$, let $\theta_r\in \operatorname{H}^3(G_r;\Fp)$ be the cohomology class constructed in \eqref{eq:thetar}. Then, $\theta_r\not=0$.
\end{proposition}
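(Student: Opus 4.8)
The plan is to show that $\theta_r = \sigma^* \cup \eta_r$ is nonzero by pulling back along a suitable subgroup and reducing to a computation there, or alternatively by producing a concrete element of $\operatorname{H}^3(G_r;\F_p)$ — for instance via a cochain / $\beta P$ description — against which $\theta_r$ pairs nontrivially. Concretely, I would first try the following: since $\theta_r$ is represented by the crossed $3$-fold extension in \eqref{eq:thetar}, and since by the last proposition of Section~\ref{sec:productsextensions} the Yoneda product of a Yoneda $1$-fold extension with a crossed $2$-fold extension computes the cup product in $\operatorname{H}^*(G_r;\F_p)$, it suffices to understand $\eta_r \in \operatorname{H}^2(G_r;\F_p)$ and then the cup product with the degree-one class $\sigma^*$. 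The class $\eta_r$ is the extension class of $1 \to C_p \to \widehat G_r \to G_r \to 1$; I would identify it explicitly as a $2$-cocycle. Because $\widehat G_r$ is built from $G_r$ by the ``twisted multiplication'' $x\cdot_{\lambda_r} y = xy\,\lambda_r(x,y)^{1/2}$ with $\lambda_r(a_{r-1},a_r)=a_{r+1}$, the class $\eta_r$ should agree with the image of $a_{r-1}^* \cup a_r^*$ (or a combination of such products together with Bocksteins, depending on how commutators interact) in $\operatorname{H}^2(G_r;\F_p)$; I would pin this down by a direct cocycle calculation, using the presentation of $G_r$ recalled at the start of Section~\ref{sec:depthonepgroups}.

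Having an explicit cocycle description of $\eta_r$ and of $\sigma^*$, the second step is to compute $\theta_r = \sigma^* \cup \eta_r$ as a degree-$3$ cohomology class and to exhibit a homology class (or, dually, a suitable restriction) on which it does not vanish. The natural candidate is to restrict to a small subgroup where the cohomology ring is completely understood. For $r = 2$, $G_2$ is the extraspecial group of order $p^3$ and exponent $p$, whose mod-$p$ cohomology ring is classically known (Lewis, and others), and there one can read off directly that the relevant product of a degree-one class with the degree-two ``extension'' class is nonzero; this anchors the base case. For general $r$, I would look for a subquotient or a quotient map $G_r \twoheadrightarrow G_2$, or better a section-type map, under which $\theta_r$ pulls back (or pushes, via a pushout of the crossed extension along a map of coefficient modules) to the nonzero class of $G_2$; the functoriality statements in the lemma of Section~\ref{sec:productsextensions} (product respects pushouts and pullbacks) are exactly what make such a reduction legitimate. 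Alternatively, restrict to the subgroup $\gen{\sigma, a_{r-1}, a_r}$, which is again extraspecial of order $p^3$ and exponent $p$ (since $[a_{r-1},\sigma]=a_r$ and everything else commutes), and check that $\res^{G_r}_{\gen{\sigma,a_{r-1},a_r}}\theta_r \neq 0$ by the known structure of $\operatorname{H}^*$ of that extraspecial group together with the fact that $\res \sigma^* \neq 0$ and $\res \eta_r$ is the fundamental degree-two class there.

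The main obstacle I anticipate is twofold. First, correctly identifying $\eta_r$: the ``$\lambda_r(x,y)^{1/2}$'' twist means $\widehat G_r$ is a central extension of $G_r$ by $C_p$ whose commutator pairing and power map must be computed carefully (this is where $p > 3$, equivalently $p \neq 2, 3$, is presumably used — one needs $1/2$ to make sense and one needs to control $p$-th powers so that $\widehat G_r$ has exponent $p$), and getting the cocycle exactly right, including any Bockstein contributions, is the delicate point. Second, propagating the nonvanishing from the small extraspecial subgroup up to $G_r$: one must check that the restriction of $\eta_r$ to $\gen{\sigma, a_{r-1}, a_r}$ is genuinely the nonzero extension class of the corresponding extraspecial group and not something that dies, which amounts to verifying that $a_{r+1}$, the kernel of $\widehat G_r \to G_r$, is ``seen'' by this subgroup — i.e. that the preimage of $\gen{\sigma,a_{r-1},a_r}$ in $\widehat G_r$ is non-split. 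I expect this to follow from $\lambda_r(a_{r-1},a_r)=a_{r+1}$ together with the fact that in $\gen{\sigma,a_{r-1},a_r}$ the only nontrivial commutator relation is $[a_{r-1},\sigma]=a_r$, so the twisted multiplication introduces a genuine new central commutator; making this precise, and excluding the degenerate possibility that the relevant product of one-dimensional classes is zero in $\operatorname{H}^*$ of the extraspecial group (it is not, by the classical computation), is the crux of the argument.
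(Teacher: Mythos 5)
Your strategy is genuinely different from the paper's. The paper argues by contradiction: if $\theta_r=0$, Proposition~\ref{prop: Equivalent Crossed X Diagram} produces a group $X$ mapping onto both $\widehat G_r$ and $G_r$; setting $Y=\gen{\bar a_{r-1},\bar a_r,\bar a_{r+1},\bar a_{r+2}}$ one checks $[\bar\sigma,Y,\gamma_r(X)]=[\gamma_r(X),\bar\sigma,Y]=1$ while $[Y,\gamma_r(X),\bar\sigma]=Z(X)\neq 1$, contradicting the three subgroup lemma. This is purely group-theoretic and needs no information about cohomology rings of extraspecial groups; it is the reason the crossed-extension formalism was set up in the first place. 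Your route --- restrict $\theta_r=\sigma^*\cup\eta_r$ to $H=\gen{\sigma,a_{r-1},a_r}\isom G_2$ and compute in the known ring $\Ho(H;\Fp)$ --- can in fact be made to work, and would give a more "computational" proof, but at the cost of genuinely needing Leary's description of $\Ho(p_+^{1+2};\Fp)$ in degrees $\le 3$.

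However, the way you propose to close the crux step contains a real error. First, a minor one: the preimage of $H$ in $\widehat G_r$ is not the Heisenberg-type central extension you describe; besides $[a_{r-1},a_r]=a_{r+1}$ coming from $\lambda_r$, one also has $[a_r,\sigma]=a_{r+1}$ in $\widehat G_r$ (since $a_r^\sigma=a_ra_{r+1}$ in $T_0/T_{r+1}$), so the preimage is a maximal class group of order $p^4$ and $\res^{G_r}_H\eta_r$ has components along \emph{both} non-inflated degree-two generators of $\operatorname{H}^2(H;\Fp)$. Second, and decisively: in $\Ho(p_+^{1+2};\Fp)$ the product of the two degree-one generators is \emph{zero} (the extension class of $1\to Z(H)\to H\to H/Z(H)\to 1$ inflates to $0$), so your claim that "the relevant product of one-dimensional classes is nonzero by the classical computation" fails, and for the same reason your first idea of writing $\eta_r$ as $a_{r-1}^*\cup a_r^*$ in $\operatorname{H}^2(G_r;\Fp)$ cannot work at all ($a_r\in[G_r,G_r]$, so $a_r^*=0$ in $\operatorname{H}^1(G_r;\Fp)$). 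What actually saves the argument is the component of $\res^{G_r}_H\eta_r$ along the degree-two generator $C$ of $\operatorname{H}^2(H;\Fp)$ that restricts to $a_{r-1}^*\cup a_r^*$ on $\gen{a_{r-1},a_r}$ (its coefficient is $\pm1$ precisely because $\lambda_r(a_{r-1},a_r)=a_{r+1}$), together with the fact that $\sigma^*\cup C\neq 0$: in the Lyndon--Hochschild--Serre spectral sequence of $1\to\gen{a_r}\to H\to H/\gen{a_r}\to 1$ this product has nonzero leading term $a_{r-1}^*\,\sigma^*\,u$ in $E_\infty^{2,1}$, while all other contributions to $\sigma^*\cup\res^{G_r}_H\eta_r$ lie in higher filtration. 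Note that the resulting class restricts to zero on every elementary abelian subgroup of $H$ (consistently with Proposition~\ref{prop: res C(E) theta = 0}), so no detection argument on abelian subgroups can replace this filtration computation; without it, your proof has a gap exactly at the point you identify as the crux.
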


\begin{proof}
	Assume by contradiction that $\theta_r =0$. Then, by Proposition~\ref{prop: Equivalent Crossed X Diagram} there exists a group $X$ such that the following diagram commutes:
	\[\begin{tikzcd}[column sep={4.em,between origins},row sep=2em]
	& 1 \drar & & & & 1 & \\
	& & C_p\times C_p \drar[swap]{\mu} \arrow[rr] & & \widehat G_r \drar \urar & & \\
	1 \rar & C_p \urar \arrow[dr,Equal] & & X \urar[swap]{\nu} \drar & & G_r \rar & 1 \\
	& & C_p \urar \arrow[rr] & & G_r \arrow[ur,Equal] \drar & & \\
	& 1 \urar &  & & & 1. & 
	\end{tikzcd}\]
	We have that $X=\gen{\bar{\sigma},\bar a_1,\dotsc,\bar a_{r+2}}$ with elements $\bar \sigma,\bar a_1,\dotsc,\bar a_{r+1}, \bar a_{r+2}\in X $ that satisfy
	\[
	\bar a_{r+2}=\mu(a_{r+2}),  \;\; \nu(\bar \sigma)=\sigma \;\; \text{ and }\; \nu(\bar a_i)=a_i \;\text{for all}\;  i=1,\dotsc,r+1,
	\]
	 and we have that $Z(X)=\gen{\bar a_{r+2}}$ and  $\gamma_r(X)=\gen{\bar a_r,\bar a_{r+1}, \bar a_{r+2}}$. Consider the normal subgroup 
	\begin{displaymath}
	Y=\gen{\bar a_{r-1},\bar a_r,\bar a_{r+1}, \bar a_{r+2}}\normaleq X,
	\end{displaymath}
	which fits into the following commutative diagram:
	\[\begin{tikzcd}[column sep={4.em,between origins},row sep=2em]
	& 1 \drar & & & & 1 & \\
	& & \gen{a_{r+1},a_{r+2}} \drar \arrow[rr] & & \gen{a_{r-1},a_r,a_{r+1}} \drar \urar & & \\
	1 \rar & \gen{a_{r+2}} \urar \arrow[dr,Equal] & & Y \urar[swap] \drar & & \gen{a_{r-1},a_r} \rar & 1 \\
	& & \gen{a_{r+2}} \urar \arrow[rr] & & \gen{a_{r-1},a_r} \arrow[ur,Equal] \drar & & \\
	& 1 \urar &  & & & 1. & 
	\end{tikzcd}\]
	Then, we have that $Z(Y)=\gen{\bar a_{r+1},\bar a_{r+2}}$, and moreover,  
	\[
	\; \big[\bar \sigma,Y,\gamma_r(X)\big]=\big[\gamma_r(X),\gamma_r(X)\big]=1 \;\text{ and } \; \big[\gamma_r(X),\bar \sigma,Y\big]=\big[Z(Y),Y\big]=1.
	\]
	Therefore, the three subgroup lemma (see \cite[5.1.10]{Robinson96}) leads us to the conclusion that $\big[Y,\gamma_r(X),\bar \sigma\big]=1$. Nevertheless, a direct computation shows that 
	\begin{displaymath}
	\big[Y,\gamma_r(X),\bar \sigma\big]=\big[Z(Y),\bar\sigma\big]=Z(X)\not=1,
	\end{displaymath}
	which gives a contradiction. Hence, $\theta_r\neq 0$.
\end{proof}

\subsection{Trivial restriction}

In this section we show that for every elementary abelian subgroup $E$ of $G_r$ of $p$-rank $\prk E=2$, the image of $\theta_r$ via the restriction map, 
$$
\res_{C_{G_r}(E)}^{G_r}\colon \operatorname{H}^3(G_r;\F_p) \longrightarrow \operatorname{H}^3(C_{G_r}(E);\F_p),
$$
is trivial, i.e.,  $\res^{G_r}_{C_{G_r}(E)}\theta_r=0$. This will imply that the cohomology class $\theta_r$ is not detected by $\HH_2(G_r)$, and so $\omega_d(G_r)=1$.

\begin{proposition}\label{prop: res C(E) theta = 0}
	Let $p>3$ be a prime number and let $r$ be an integer such that $1<r<p-1$. Let $E\leq G_r$ be an elementary abelian subgroup with $\prk E=2$. Then, $\res^{G_r}_{C_G(E)}\theta_r=0$.
\end{proposition}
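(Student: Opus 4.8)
The plan is to reduce the statement to a finite, group-theoretic check by classifying the relevant centralizers and using the functoriality of the Yoneda product. First I would observe that $\theta_r = \sigma^* \cup \eta_r$, and since $\sigma^*$ is represented by a Yoneda $1$-fold extension while $\eta_r$ is represented by a crossed $2$-fold extension, Lemma \ref{lemma} (part 3) gives $\res^{G_r}_H \theta_r = \res^{G_r}_H(\sigma^*)\cup \res^{G_r}_H(\eta_r)$ for any subgroup $H \leq G_r$; here restriction of the crossed extension $\eta_r$ is computed by pulling back the extension \eqref{eq:etar} along the inclusion $H \hooklongrightarrow G_r$, i.e., by taking the preimage of $H$ in $\widehat G_r$. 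So it suffices to show that for $H = C_{G_r}(E)$ with $\prk E = 2$, either $\res^{G_r}_H(\sigma^*) = 0$ or $\res^{G_r}_H(\eta_r) = 0$, or more generally that their Yoneda product vanishes.

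The next step is to enumerate the rank-$2$ elementary abelian subgroups $E \leq G_r$ up to the conjugacy needed, and compute their centralizers. Using the presentation of $G_r$ (relations $\sigma^p = a_i^p = [a_i,a_j] = [a_r,\sigma] = 1$ and $[a_j,\sigma] = a_{j+1}$), the abelian subgroup $\gen{a_1,\dots,a_r}$ is self-centralizing of rank $r$ and contains the center $\gen{a_r}$; any element outside it has the form $a\sigma^k$ with $k \not\equiv 0$, and conjugation by $\sigma$ acts on $\gen{a_1,\dots,a_r}$ unipotently with a single Jordan block. From this one checks that a rank-$2$ elementary abelian $E$ is, up to conjugacy, either (i) contained in $\gen{a_1,\dots,a_r}$, in which case $C_{G_r}(E) = \gen{a_1,\dots,a_r}$ (since $\sigma$ centralizes no proper-rank piece beyond $\gen{a_r}$ unless $r$ is small — this needs care), or (ii) $E = \gen{\sigma, a_r} = Z(G_r)\gen{\sigma}$ with $C_{G_r}(E) = E$ as already noted in \eqref{eq:lowerupperbound}. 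In case (ii), $H = \gen{\sigma,a_r} \cong C_p \times C_p$; here I would show $\res^{G_r}_H(\eta_r) = 0$ by exhibiting a splitting of the pulled-back extension, i.e., a lift of $\gen{\sigma,a_r}$ to a $C_p\times C_p$ inside $\widehat G_r = C_p \ltimes (T_0/T_{r+1})_{\lambda_r}$ that intersects $\gen{a_{r+1}}$ trivially — this is plausible because $\lambda_r(a_{r-1},a_r) = a_{r+1}$ only involves $a_{r-1}, a_r$, so the subgroup generated by $\sigma$ and (a suitable lift of) $a_r$ avoids the twist. In case (i), $H = \gen{a_1,\dots,a_r}$ is contained in the abelian normal subgroup $T_0/T_r$, on which $\sigma^*$ vanishes, so $\res^{G_r}_H(\sigma^*) = 0$ and hence the product is zero.

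The main obstacle I anticipate is the classification in case (i): correctly determining $C_{G_r}(E)$ for every rank-$2$ elementary abelian $E$, including $E$'s that meet $\gen{a_1,\dots,a_r}$ in rank $1$ and contain an element $a\sigma$. One must verify that any such centralizer is still contained in a subgroup on which $\sigma^*$ vanishes, or else that $\eta_r$ restricts trivially there; this requires understanding which products $a\sigma$ have order $p$ and commute with a chosen $a_i$, which in turn uses the exponent-$p$ structure and the commutator identities, and possibly a separate splitting argument for $\widehat G_r$. A clean way to organize this is: show every rank-$2$ elementary abelian subgroup is conjugate into one of the two families above, then dispatch each family by the corresponding vanishing ($\sigma^*$ on the abelian family, $\eta_r$ on $\gen{\sigma,a_r}$). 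Once both families are handled, $\theta_r$ restricts to zero on all $C_{G_r}(E)$ with $\prk E = 2$, so $\theta_r \in \bigcap \Ker\res$ over $\HH_2(G_r)$; combined with Proposition \ref{prop: theta =/= 0} this forces $\omega_d(G_r) = 1$, and Theorem \ref{thm:DepthCarlson} together with \eqref{eq:lowerupperbound} gives $\depth\Ho(G_r;\F_p) = 1$.
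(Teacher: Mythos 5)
Your case (i) ($E\leq\gen{a_1,\dotsc,a_r}$) is exactly the paper's argument: the centralizer is $\gen{a_1,\dotsc,a_r}$, the class $\sigma^*$ restricts trivially there, and bilinearity of the product kills $\theta_r$. The gap is in case (ii). A secondary point first: not every rank-two $E\not\leq\gen{a_1,\dotsc,a_r}$ is conjugate to $\gen{\sigma,a_r}$ (for instance $\gen{\sigma a_1,a_r}$ is not, since conjugation acts trivially on $G_r/\gamma_2(G_r)$ and the images of the two subgroups there differ), so one must treat $E=\gen{b,a_r}$ with $b=\sigma x$, $x\in\gen{a_1,\dotsc,a_{r-1}}$, directly. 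More seriously, the splitting you propose for the pulled-back extension does not exist. The twist in $\widehat G_r=C_p\ltimes(T_0/T_{r+1})_{\lambda_r}$ is not only the form $\lambda_r$: the point group acts on $T_0/T_{r+1}$ with $[a_j,\sigma]=a_{j+1}$ for all $j\leq r$, so in $\widehat G_r$ one has $[a_r,\sigma]=a_{r+1}\neq 1$. Hence the preimage $\widehat E=\gen{b,a_r,a_{r+1}}$ of $E=\gen{b,a_r}$ is the non-abelian extraspecial group of order $p^3$ and exponent $p$, and the central extension $1\to\gen{a_{r+1}}\to\widehat E\to E\to 1$ cannot split, since a split central extension would force $\widehat E$ to be abelian. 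So $\res^{G_r}_{E}\eta_r\neq 0$, and no choice of lift of $a_r$ avoids this.

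The missing idea is that in case (ii) neither factor of $\theta_r=\sigma^*\cup\eta_r$ restricts to zero; what vanishes is the triple product. The paper identifies $\res^{G_r}_{E}\eta_r$ with the cup product $b^*\cup a_r^*$ of two degree-one classes (the standard description of the extraspecial extension class, cf.\ \cite[Section IV.3]{Brown82}), so that
\[
\res^{G_r}_{E}\theta_r=(\res^{G_r}_{E}\sigma^*)\cup b^*\cup a_r^*,
\]
a product of three elements of $\operatorname{H}^1(E;\Fp)$. Since $E\isom C_p\times C_p$ and $p$ is odd, the degree-one part of $\Ho(E;\Fp)$ generates an exterior algebra on two generators, so any product of three degree-one classes is zero. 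This is the step your plan lacks; the remaining structure (classification of centralizers and the deduction of $\omega_d(G_r)=1$) is consistent with the paper.
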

\begin{proof}
	There are two types of elementary abelian subgroups $E\leq G_r$, either $E\leq \gen{a_1,\dotsc,a_r}$ or $E\not\leq \gen{a_1,\dotsc,a_r}$. Assume first that $E\leq \gen{a_1,\dotsc,a_r}$. Then, $C_{G_r}(E)=\gen{a_1,\dotsc,a_r}$ and we have that $\res^{G_r}_{C_{G_r}(E)}\sigma^*=0$. Therefore,  
	\[
	\res^{G_r}_{C_{G_r}(E)} \theta_r =(\res^{G_r}_{C_r(E)} \sigma^*)\cup(\res^{G_r}_{C_{G_r}(E)}\eta_r)=0.
	\]
	
	Assume now that $E\not\leq \gen{a_1,\dotsc,a_r}$. Then, $E=\gen{b, a_r}$ with $b=\sigma x$ for some $x\in\gen{a_1,\dotsc,a_{r-1}}$, and $C_{G_r}(E)=E$. Moreover, $\res^{G_r}_{C_G(E)}\eta_r$ is represented by the extension that is obtained by taking the pullback of $\eta_r$ via the inclusion $E\hooklongrightarrow G_r$, as illustrated in the following diagram 
	\[\begin{tikzcd}
	1 \rar & \gen{a_{r+1}} \rar \dar[Equal] & \widehat E =\gen{b, a_r,a_{r+1}} \rar \dar & E \rar \dar & 1 \\
	1 \rar & \gen{a_{r+1}} \rar & \widehat G_r \rar & G_r \rar & 1.
	\end{tikzcd}\]
	Observe that $\widehat E\isom C_p\ltimes(C_p\times C_p)$ is the extraspecial group of order $p^3$ and exponent $p$. Hence, $\res^{G_r}_{C_{G_r}(E)}\eta_r$ is represented by the extension
	\begin{equation}\label{eq:extraspecialextensionclass}
	1 \longrightarrow  C_p=\gen{a_{r+1}} \rightarrow  \widehat E=C_p\ltimes(C_p\times C_p) \rightarrow  C_p\times C_p=\gen{b,a_r} \rightarrow  1.
         \end{equation}
	It can be readily checked (following the construction in \cite[Section IV.3]{Brown82}) that the extension class of \eqref{eq:extraspecialextensionclass} coincides with the cup-product $b^*\cup a_r^*$, and so $\res^{G_r}_{C_{G_r}(E)}\eta_r= b^*\cup a_r^*$. Consequently,
	\begin{displaymath}
	\res^{G_r}_{C_{G_r}(E)}\theta_r = (\res^{G_r}_{C_{G_r}(E)}\sigma^*) \cup b^*\cup a_r^* = 0,
	\end{displaymath}
	as the product of any three elements of degree one is trivial in $\operatorname{H}^3(E;\F_p)$. 
\end{proof}

\begin{proof}[Proof of Theorem \ref{thm:mainresultintro}]
 In \eqref{eq:lowerupperbound}, we obtained that $1\leq \depth\Ho(G_r;\F_p)\leq 2$. In Proposition \ref{prop: theta =/= 0}, we constructed a cohomology class $\theta_r\in \Ho(G_r;\F_p)$ that is non-trivial and that, for every elementary abelian subgroup $E\leq G_r$ of rank 2, satisfies that $\res^{G_r}_{C_{G_r}(E)}\theta_r=0$ (see Proposition \ref{prop: res C(E) theta = 0}). This implies that $ \omega_d(G)=\prk Z(G) = 1$. Then, by Theorem \ref{thm:DepthCarlson}, we conclude that $\depth\Ho(G_r;\Fp)=\prk Z(G)=1$.
\end{proof}

\section{Remarks and further work}

Let $p$ be an odd prime number and let $r$ be an integer with $r\geq p-1$. Consider the finite $p$-groups $G_r=C_p\ltimes T_0/T_r$ defined in \eqref{eq:Grassemidirectproduct}. For each prime $p$, if $r=p-1$, then $G_r$ has size $p^{r+1}$, has exponent $p$ and is of maximal nilpotency class; while if $r>p-1$, then $G$ has size $p^{r+1}$ and exponent bigger than $p$. In particular, for the $p=3$ and $r=2$ case, $G_2$ is the extraspecial $3$-group of order $27$ and exponent $3$, and it is known that the depth of its mod-$3$ cohomology ring is $2$ (compare \cite{Leary92} and \cite{Minh01}). We believe that this phenomena will occur with more generality; namely, for every prime number $p\geq 3$ and $r\geq p-1$, the following equality will hold $\depth \Ho(G_r;\F_p)=2$. For these groups, if we mimic the construction of the mod-$p$ cohomology class $\theta_r$ in Section \ref{sec:constructionthetar}, it is no longer true that its restriction in the mod-$p$ cohomology of the centralizer of all elementary abelian subgroups of $G_r$ of rank 2 vanishes. We propose the following conjecture.

\begin{conjecture}
Let $p$ be an odd prime, let $r\geq p-1$ be an integer, and let 
$$G_r=C_p\ltimes T_0/T_r$$ 
be as in \eqref{eq:Grassemidirectproduct}. Then $\Ho(G_r;\F_p)$ has depth $2$.
\end{conjecture}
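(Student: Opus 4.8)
The plan is to split the statement into an upper bound $\depth\Ho(G_r;\Fp)\le 2$, which is elementary, and a lower bound $\depth\Ho(G_r;\Fp)\ge 2$, which carries all the content when $r\ge p-1$: here, unlike in the situation of Theorem~\ref{thm:mainresultintro}, no Carlson-style non-detected class forces the depth down to $1$, so a genuinely new argument is needed.

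\textbf{Upper bound.} Put $a_r=(\zeta-1)^{r-1}+T_r$ for arbitrary $r\ge 1$; then $\gen{a_r}=T_{r-1}/T_r$ is the centre of $G_r$ and is cyclic of order $p$ (so Duflot's inequality in Proposition~\ref{prop:DuflotNotbohm} alone only gives $\depth\Ho(G_r;\Fp)\ge 1$). An element $\sigma^j t$ of $G_r=C_p\ltimes T_0/T_r$ commutes with $\sigma$ if and only if $t$ is $\sigma$-fixed, and the $\sigma$-fixed points of $T_0/T_r$ are exactly $T_{r-1}/T_r$; hence $E:=\gen{\sigma,a_r}\cong C_p\times C_p$ is self-centralizing in $G_r$. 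Since $\depth\Ho(C_p\times C_p;\Fp)=2$, Proposition~\ref{prop:DuflotNotbohm} gives $\depth\Ho(G_r;\Fp)\le\depth\Ho(E;\Fp)=2$ for every $r\ge 1$. So the task is to rule out depth $1$ when $r\ge p-1$.

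\textbf{Lower bound, by induction on $r$.} The key structural fact is that $\gen{a_r}$ is central and $T_{r-1}$ is $C_p$-invariant, so $G_r/\gen{a_r}\cong C_p\ltimes(T_0/T_{r-1})=G_{r-1}$; thus the groups $G_r$ are linked by a chain of central $C_p$-extensions $1\to C_p\to G_r\to G_{r-1}\to 1$, with class $\alpha_r\in\operatorname{H}^2(G_{r-1};\Fp)$ which is nonzero because $a_r\in[G_r,G_r]$. For $p>3$ the base case is $r=p-1$: then $G_{r-1}=G_{p-2}$ with $1<p-2<p-1$, so $\depth\Ho(G_{p-2};\Fp)=1$ by Theorem~\ref{thm:mainresultintro}; for $p=3$ the base case is $r=2$, where $G_2$ is the extraspecial group of order $27$ and exponent $3$, whose mod-$3$ cohomology is known to have depth $2$ (see \cite{Leary92}, \cite{Minh01}). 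The inductive step analyses the Lyndon--Hochschild--Serre spectral sequence $E_2^{s,t}=\operatorname{H}^s(G_{r-1};\operatorname{H}^t(C_p;\Fp))\Rightarrow\operatorname{H}^{s+t}(G_r;\Fp)$; with trivial coefficients $E_2=\Ho(G_{r-1};\Fp)\otimes\Fp[y]\otimes\Lambda(x)$, $|x|=1$, $|y|=2$, and one has $d_2(x)=\pm\alpha_r$ and $d_3(y)=\pm\beta(\alpha_r)$ ($\beta$ the Bockstein), with the higher differentials governed by the Kudo transgression theorem. From the $E_\infty$-page one extracts a regular sequence of length $2$ in $\Ho(G_r;\Fp)$ — its first term the Duflot regular element attached to $\gen{a_r}$, its second descending from $y$ or a $p$-th power of it — or, equivalently, via a Quillen-type criterion, two homogeneous classes restricting to a regular sequence on $\Ho(A;\Fp)$ for every maximal elementary abelian $A\le G_r$ (these have ranks $2$ and $p-1$).

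\textbf{Main obstacle.} The hard part is to control how depth behaves along a central $C_p$-extension: in the base case the extension must raise the depth from $1$ to $2$, while for $r>p-1$ — where the inductive hypothesis and the upper bound already confine the depth to $\{1,2\}$ — it must fail to lower it to $1$. In spectral-sequence terms the delicate point is the interaction of $\alpha_r$ and $\beta(\alpha_r)$ inside $\Ho(G_{r-1};\Fp)$: one expects $\alpha_r$ to be a non-zero-divisor but $\beta(\alpha_r)$ to become a zero-divisor modulo $\alpha_r$, so that $E_\infty$ retains enough classes in odd $y$-degree to sustain depth $2$; establishing exactly this, and excluding interference from higher differentials and from extension problems in the spectral sequence, is the crux. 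A conceptually different route would bypass the spectral sequence by proving Carlson's Conjecture~\ref{conj:Carlson} for the groups $G_r$ — namely its open inequality $\depth\ge\omega_a$ — together with the equality $\omega_a(G_r)=2$; since by Quillen the minimal primes $\p$ of $\Ho(G_r;\Fp)$ satisfy $\dim\Ho(G_r;\Fp)/\p\in\{2,p-1\}$, this last point reduces to showing that $\Ho(G_r;\Fp)$ has no embedded associated prime $\p$ with $\dim\Ho(G_r;\Fp)/\p=1$.
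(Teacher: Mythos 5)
This statement is an open conjecture in the paper: the authors offer only supporting evidence (machine computations for $p=3$, $r=2,3$ from \cite{SKing}, and the $r$-independence of the cohomology \emph{groups} for $r\geq p-1$ from \cite{Garaialde18}), not a proof. So there is no argument in the paper to compare yours against, and the question is whether your proposal actually closes the problem. It does not. Your upper bound is correct and is the same computation the paper uses in \eqref{eq:lowerupperbound}: $E=\gen{\sigma,a_r}$ is a self-centralizing $C_p\times C_p$ for every $r\geq 2$ (the $\sigma$-fixed points of $T_0/T_r$ are exactly $T_{r-1}/T_r$, and $a_r$ still has order $p$ even though $T_0/T_r$ has exponent $p^2$ or more when $r>p-1$), so Notbohm's inequality gives $\depth\Ho(G_r;\Fp)\leq 2$. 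The identification $G_r/\gen{a_r}\cong G_{r-1}$ and the non-triviality of the extension class $\alpha_r$ (since $a_r\in\gamma_2(G_r)\leq\Phi(G_r)$) are also correct.

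The gap is the entire lower bound, and you say so yourself in the ``Main obstacle'' paragraph. The inductive step would require a statement of the form ``a central $C_p$-extension with class $\alpha_r$ raises the depth from $1$ to $2$ in the base case $r=p-1$, and preserves depth $2$ for $r>p-1$,'' and no such control of depth along central extensions exists in general --- indeed your own base case ($G_{p-2}\to G_{p-1}$ raising depth) and inductive step ($G_{r-1}\to G_r$ preserving it) exhibit opposite behaviours of the same construction, so whatever mechanism is at work must depend on finer information about $\alpha_r$ and $\beta(\alpha_r)$ inside $\Ho(G_{r-1};\Fp)$ than you extract. The assertions ``$d_2(x)=\pm\alpha_r$, $d_3(y)=\pm\beta(\alpha_r)$, higher differentials governed by Kudo transgression'' describe the standard shape of the Lyndon--Hochschild--Serre spectral sequence but do not by themselves produce a regular sequence of length $2$ on $E_\infty$, nor resolve the multiplicative extension problems needed to lift one to $\Ho(G_r;\Fp)$. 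The alternative route via Carlson's Conjecture \ref{conj:Carlson} is likewise circular as stated: the inequality $\depth\geq\omega_a$ is precisely the open half of that conjecture, and you would additionally have to exclude embedded associated primes of dimension $\leq 1$, which is not addressed. In short: a correct and useful reduction of the problem to a concrete spectral-sequence question, but not a proof.
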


The above conjecture is known to be true for the particular cases where $p=3$ and $r=2$ or $r=3$. In these two cases the mod-$p$ cohomology rings have been calculated using computational sources (see \cite{SKing}). Another argument supporting the conjecture is that for a fixed prime $p$ and $r\geq p-1$, the groups $G_r$ have isomorphic mod-$p$ cohomology groups; not as rings, but as $\F_p$-modules (see \cite{Garaialde18}).  This last isomorphism comes from a universal object described in the category of cochain complexes together with a quasi-isomorphism that induces an isomorphism at the level of spectral sequences.

\end{document}